\documentclass{amsart}

\usepackage{fonts, topology, nicetikz, todonotes, mathtools, verbatim, pgfplots, orcidlink}

\usepackage[utf8]{inputenc}

\newtheorem{conjecture}{Conjecture}[section]
\newtheorem{corollary}[conjecture]{Corollary}
\newtheorem{definition}[conjecture]{Definition}
\newtheorem{example}[conjecture]{Example}
\newtheorem{lemma}[conjecture]{Lemma}
\newtheorem{proposition}[conjecture]{Proposition}
\newtheorem{remark}[conjecture]{Remark}
\newtheorem{theorem}[conjecture]{Theorem}

\DeclareMathOperator{\codim}{codim}

\DeclareMathOperator{\Der}{Der}
\DeclareMathOperator{\ev}{ev}
\DeclareMathOperator{\id}{id}

\DeclareMathOperator{\ord}{ord}

\DeclareMathOperator{\mult}{mult}

\title{A proof of the Mond conjecture for wave fronts}
\date{\today}

\author{C. Mu\~noz-Cabello \orcidlink{0000-0002-3115-1851}, J.J. Nu\~no-Ballesteros \orcidlink{0000-0001-6725-541X}, R. Oset Sinha \orcidlink{0000-0002-5652-7982}}

\address{Departament de Matem\`{a}tiques,
Universitat de Val\`encia, Campus de Burjassot, 46100 Burjassot,
Spain}
\email{Christian.Munoz@uv.es}
\email{Juan.Nuno@uv.es}
\email{Raul.Oset@uv.es}

\thanks{Work of C. Mu\~noz-Cabello, Juan J. Nu\~no-Ballesteros and R. Oset Sinha partially supported by Grant PID2021-124577NB-I00 funded by MCIN/AEI/ 10.13039/501100011033 and by ``ERDF A way of making Europe"}

\subjclass[2020]{Primary 32S30; Secondary 32S25, 58K60}
\keywords{frontals, wave fronts, invariants of mappings, frontal Milnor number, Mond conjecture}

\begin{document}

\maketitle

\begin{abstract}
We prove the Mond conjecture for wave fronts which states that the number of parameters of a frontal versal unfolding is less than or equal to the number of spheres in the image of a stable frontal deformation with equality if the wave front is weighted homogeneous. We give two different proofs. The first one depends on the fact that wave fronts are related to discriminants of map germs and we then use the analogous result proved by Damon and Mond in this context. The second one is based on ideas by Fernández de Bobadilla, Nuño-Ballesteros and Peñafort Sanchis and by Nuño-Ballesteros and Fernández-Hernández. The advantage of the second approach is that most results are valid for any frontal, not only wave fronts, and thus give important tools which may be useful to prove the conjecture for frontals in general.
\end{abstract}

\section{Introduction}

The Mond conjecture is an inequality of type $\mu\ge \tau$, but in the context of the Thom-Mather theory of singularities of mappings. It was stated by D. Mond in \cite{Mond_VanishingCycles} and it says that given a holomorphic map germ $f\colon (\mb C^n,S)\to(\mb C^{n+1},0)$, with $S\subset\mb C^n$ a finite set, with isolated instability and such that $(n,n+1)$ are in the range of nice dimensions of Mather \cite{Mather_VI}, then 
\begin{equation}\label{eq:image}
\codim_{\ms A_e}(f)\le \mu_I(f),
\end{equation}
with equality if $f$ is weighted homogeneous. 

The two invariants of the inequality appear in the scope of $\ms A$-equivalence of mappings, where we allow holomorphic coordinate changes in the source and target. On one hand, the $\ms A_e$-codimension, denoted by $\codim_{\ms A_e}(f)$, 
was introduced by Mather and it measures the minimal number of parameters necessary to have a versal unfolding of $f$. So, it  plays the role of the Tjurina number $\tau$ in the context of deformations of isolated hypersurface singularities (IHS) $(X,0)\subset(\mb C^{n+1},0)$.

On the other hand, $\mu_I(f)$ is called the image Milnor number and was introduced by Mond in \cite{Mond_VanishingCycles} in analogy with the classical Milnor number $\mu$ of an IHS. When $(n,n+1)$ are nice dimensions, $f$ always admits a stabilisation, that is, a 1-parameter unfolding $F=(f_t,t)$ with the property that $f_0=f$ and $f_t$ has only stable singularities for $t\ne0$. If we denote by $X_t$ the image of $f_t$ and by $B_\eps \subseteq \mb{C}^{n+1}$ the closed ball of radius $\eps > 0$, Mond showed that for $0\ll\delta\ll \epsilon\ll 1$ and $0<|t|<\delta$, the space $X_t\cap B_\epsilon$ has the homotopy type of a wedge of $n$-spheres, and the number of such spheres, $\mu_I(f)$, only depends on $f$.

Although the classical $\mu\ge\tau$ inequality for IHS is more or less obvious, the Mond conjecture is known to be true only for plane curves and surfaces (see \cite{Mond_VanishingCycles,Mond_wires}), but it remains open for dimensions $n\ge 3$. We refer to the book \cite{MondNuno} or the recent survey \cite{NunoPenafort} for more details. 

We also remark that there is another version of the inequality for singularities of mappings $f\colon (\mb C^n,S)\to(\mb C^{p},0)$, with $n\ge p$, as follows:
\begin{equation}\label{eq:disc}
\codim_{\ms A_e}(f)\le \mu_\Delta(f),
\end{equation}
with equality if $f$ is weighted homogeneous. Here, $\mu_\Delta(f)$ is the discriminant Milnor number, defined in an analogous way to the image Milnor number, but taking the discriminant $\Delta(f_t)$ (i.e., the image of the critical locus) instead of the image of a stabilisation $f_t$. This inequality is in fact a theorem that was proved by Damon and Mond in \cite{MondDamon}.

In this paper, we consider an inequality of type $\mu\ge\tau$ but in the context of deformations of singularities of frontals. We have developed in the last years this theory (see \cite{FrontalSurfaces,Crk1Frontals}), where we consider the subclass of frontal map germs $f\colon(\mb C^n,S)\to(\mb C^{n+1},0)$. Roughly speaking, $f$ is a frontal if its image in $\mb C^{n+1}$ has a well defined tangent hyperplane at each point $x$ in a neighbourhood of $S$ in $\mb C^n$ and hence, $f$ admits a smooth Nash lift $\tilde f$ to the projectivised cotangent bundle $PT^*\mb C^{n+1}$. The notion of frontal appeared for the first time in the real smooth case in the works of Fujimori, Saji, Umehara and Yamada \cite{FSUY} as a generalisation of a wave front (in the terminology of the Russian school of singularities \cite{Arnold_I}), in the particular case that the Nash lift $\tilde f$ is an immersion.

The frontal structure is preserved by $\ms A$-equivalence, so we can consider the induced equivalence for frontals. However, an unfolding of a frontal is not a frontal in general, hence we have to consider only frontal deformations which preserve such structure. In this setting, we also have a well stablished notion of frontal $\ms F_e$-codimension, denoted by $\codim_{\ms F_e}(f)$, which gives the minimal number of parameters necessary for a versal frontal unfolding. 
Analogously, if $f$ has isolated frontal instability and $(n,n+1)$ are nice dimensions in the frontal sense, then $f$ admits a frontal stabilisation $F=(f_t,t)$ and following the arguments of Mond, we can show that $X_t\cap B_\epsilon$ has the homotopy type of a wedge of $n$-spheres, where $X_t$ is the image of $f_t$, $0\ll\delta\ll \epsilon\ll 1$ and $0<|t|<\delta$. We call the number of such spheres the frontal Milnor number $\mu_\ms{F}(f)$. 

\begin{example}{\rm
Consider the $E_6$-singularity, i.e., the plane curve $(X,0)$, with $X=\{y\in\mb C^2\ |\  y_1^4-y_2^3=0\}$ (Fig. \ref{fig:E6}.(a)). The classical Milnor number is $\mu=6$, since its Milnor fibre $B_\epsilon\cap\{y\in\mb C^2\ |\ y_1^4-y_2^3=t\}$, with $0<|t|<\delta$, has the homotopy type of a wedge of six 1-spheres  (Fig. \ref{fig:E6}.(b)). 

The curve $(X,0)$ can be also seen as the image of the map germ $f\colon(\mb C,0)\to(\mb C^2,0)$, $f(x)=(x^3,x^4)$. The deformation $f_t(x)=(x^3-tx,x^4+(3t/2)x^2)$ defines a stabilisation, whose image is an immersed curve with 3 nodes. This implies that $B_\epsilon\cap f_t(\mb C)\simeq S^1\vee S^1\vee S^1$, with $0<|t|<\delta$, so $\mu_I(f)=3$ (Fig. \ref{fig:E6}.(c)). 

However, $f_t$ is not a frontal stabilisation, since the map $F(x,t)=(f_t(x),t)$ is not a frontal. In fact, a frontal stabilisation is given by $f'_t(x)=(x^3+tx,x^4+(2t/3)x^2)$. The image of $f'_t$ is now a plane curve with 2 cusps and one node. In this case, $B_\epsilon\cap f'_t(\mb C)\simeq S^1$, with $0<|t|<\delta$, hence $\mu_\ms F(f)=1$ (Fig. \ref{fig:E6}.(d)).
\begin{figure}[ht]
\begin{center}
\includegraphics[width=3cm]{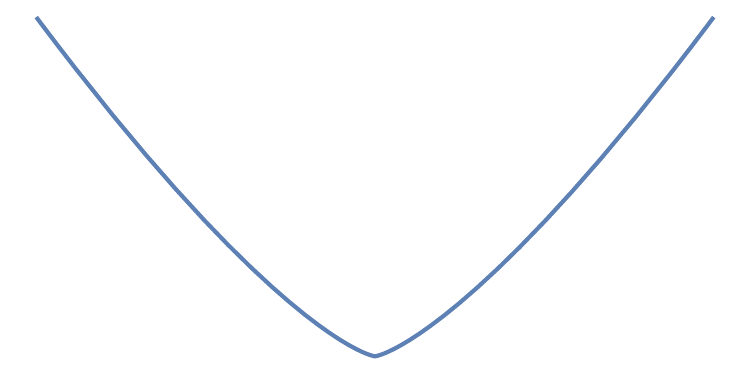}\includegraphics[width=3cm]{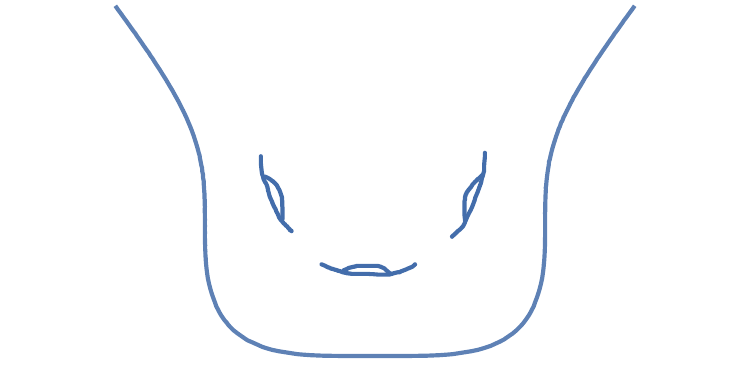}\includegraphics[width=3cm]{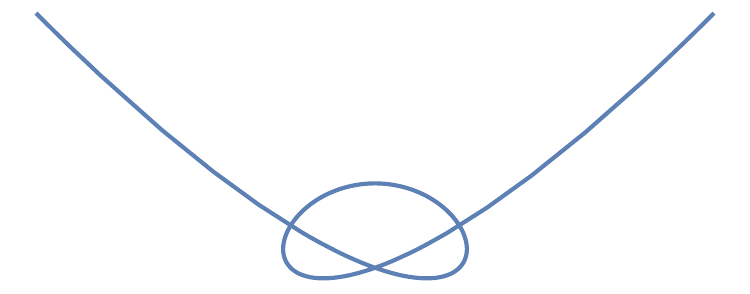}\includegraphics[width=3cm]{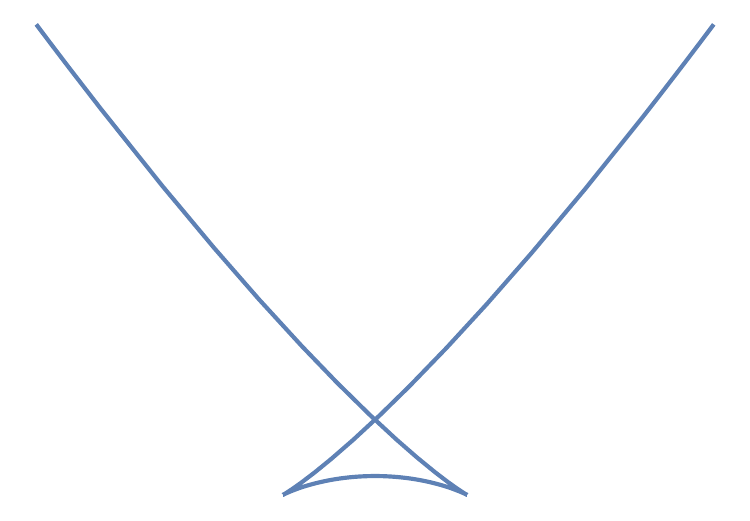}

(a) \hskip2.5cm (b) \hskip2.5cm (c) \hskip2.5cm (d)
\end{center}
\caption{Deformations of the $E_6$-singularity}\label{fig:E6}
\end{figure}
}
\end{example}

It is now natural to ask for a frontal version of the Mond conjecture as follows: let $f\colon (\mb C^n,S)\to(\mb C^{n+1},0)$ be a frontal with isolated frontal instability and such that $(n,n+1)$ are frontal nice dimensions, then 
\begin{equation}\label{eq:frontal}
\codim_{\ms F_e}(f)\le \mu_\ms F(f),
\end{equation}
with equality if $f$ is weighted homogeneous. The frontal Mond conjecture \eqref{eq:frontal} has been already proved for plane curves (i.e., $n=1$) in \cite{Crk1Frontals}. It follows from the original Mond conjecture \eqref{eq:image} and the fact that $\codim_{\ms F_e}(f)=\codim_{\ms A_e}(f)-\mult(f)+1$ and also $\mu_\ms F(f)=\mu_I(f)-\mult(f)+1$, where $\mult(f)$ is the multiplicity of the plane curve.

In this paper, we follow the ideas of \cite{Disentanglements} and define a Jacobian module $M_\ms F(g)$, where $g\colon(\mb C^{n+1},0)\to(\mb C,0)$ is a reduced equation of the image of $f$, with the property that
\[
\dim_\mb C M_\ms F(g)\ge \codim_{\ms F_e}(f),
\]
with equality if $f$ is weighted homogeneous. Hence, the frontal Mond conjecture \eqref{eq:frontal} follows if we could show that $\mu_\ms F(f)=\dim_\mb C M_\ms F(g)$. In fact, we also define a relative version of the Jacobian module, $M_y(G)$, where $G$ a reduced equation of the image of an $r$-parameter frontal unfolding $F$, such that $M_y(G)\otimes \ms O_r\cong M_\ms F(g)$. Furthermore, if $F$ is frontal stable, then
\[
\mu_\ms F(f)=e(\mathfrak m_r;M_y(G)),
\]
the Samuel multiplicity of $M_y(G)$ over the parameter ring $\ms O_r$. As a consequence, when $M_y(G)$ is Cohen-Macaulay of dimension $r$ we get the desired equality $\mu_\ms F(f)=\dim_\mb C M_\ms F(g)$ and hence, the frontal Mond conjecture \eqref{eq:frontal}. In the last part of the paper, we show the frontal Mond conjecture \eqref{eq:frontal} for wave fronts. Our proof follows similar arguments to the proof of the Damon-Mond inequality \eqref{eq:disc} for the discriminant Milnor number in \cite{MondDamon}.

\section{Frontal map germs}
\subsection{Deformations of frontal map germs in corank $1$}
Let $PT^*\mb{C}^{n+1}$ be the projectivized cotangent bundle of $\mb{C}^{n+1}$.
If $(z,[\omega]) \in PT^*\mb{C}^{n+1}$, we equip $PT^*\mb{C}^{n+1}$ with the contact structure given by the differential form
	\[\alpha=\omega_1\,dz^1+\dots+\omega_{n+1}\,dz^{n+1}.\]
We consider the canonical projection $\pi\colon PT^*\mb{C}^{n+1} \to \mb{C}^{n+1}$ given by $\pi(z,[\omega])=z$, whose fibres are Legendrian submanifolds of $PT^*\mb{C}^{n+1}$ under this contact structure (i.e. $\ker d\pi_{(z,[\omega])} \subseteq \ker\alpha_{(z,[\omega])}$ for all $(z,[\omega]) \in PT^*\mb{C}^{n+1}$).
A holomorphic map $F\colon N \subset \mb{C}^n \to PT^*\mb{C}^{n+1}$ is \textbf{integral} if $F^*\alpha=0$.

\begin{definition}
	Let $N \subset \mb{C}^n$ be an open subset.
	A holomorphic map $f\colon N \to \mb{C}^{n+1}$ is \textbf{frontal} if there exists an integral map $F\colon N \to PT^*\mb{C}^{n+1}$ such that
		\[f=\pi \circ F.\]
	If $F$ is an immersion, we say $f$ is a \textbf{Legendrian map} or \textbf{wave front}.
	Similarly, a hypersurface $X \subset \mb{C}^{n+1}$ is \textbf{frontal} (resp. a \textbf{wave front}) if there exists a frontal map (resp. wave front) $f\colon N \to \mb{C}^{n+1}$ such that $X=f(N)$.
\end{definition}

If $F\colon N \to PT^*\mb{C}^{n+1}$ is an integral map and $f=\pi\circ F$,
	\[0=F^*\alpha=\sum^{n+1}_{i=1}\nu_i d(Z_i\circ F)=\sum^{n+1}_{i=1}\sum^n_{j=1}\nu_i\frac{\p f_i}{\p x_j}\,dx^j\]
for some $\nu_1,\dots,\nu_{n+1} \in \ms{O}_n$, not all of them zero.
This is the same as claiming that there exists a nowhere-vanishing differential $1$-form $\nu$ on $f(N)$ such that $\nu(df\circ \xi)=0$ for all vector fields $\xi$ on $N$.

Since $PT^*\mb{C}^{n+1}$ is a fibre bundle, we can find for each $(z,[\omega]) \in PT^*\mb{C}^{n+1}$ an open neighbourhood $Z \subset \mb{C}^{n+1}$ of $z$ and an open $U \subseteq \mb{C}P^n$ such that $\pi^{-1}(Z)\cong Z\times U$.
Therefore, $F$ is contact equivalent to the integral mapping $\tilde f(x)=(f(x),[\nu_x])$, known as the \textbf{Nash lift} of $f$.
In particular, if $\Sigma(f)$ is nowhere dense in $N$, $\nu$ can be uniquely extended from $N\backslash \Sigma(f)$ to $N$, and there is a one-to-one correspondence between $f$ and $\tilde f$.
Such a frontal map is known as a \textbf{proper frontal} map \cite{Ishikawa_Survey}.

\begin{definition}
	Let $S \subset \mb{C}^n$ be a finite set.
	A holomorphic multigerm $f\colon (\mb{C}^n,S) \to (\mb{C}^{n+1},0)$ is \textbf{frontal} if it has a frontal representative $f\colon N \to Z$.
	Given a hypersurface $X \subset \mb{C}^{n+1}$, $(X,0)$ is a \textbf{frontal} hypersurface germ if there exists a finite frontal map germ $f\colon (\mb{C}^n,S) \to (\mb{C}^{n+1},0)$ such that $(X,0)=f(\mb{C}^n,S)$.
\end{definition}

Similarly, we shall say that $f$ is a proper frontal multigerm if it has a representative which is a proper frontal map.

\begin{definition}
	A frontal map germ $f\colon (\mb{C}^n,S) \to (\mb{C}^{n+1},0)$ is a \textbf{germ of wave fronts} if $\tilde f$ is an immersion.
	We also call the image of a germ of wave fronts a \textbf{wave front}.
\end{definition}

Other authors such as \cite{Arnold_I} refer to $f$ as a \emph{Legendrian map germ}, and its image as a \emph{front}.

Given a smooth map germ $f\colon (\mb{C}^n,S) \to (\mb{C}^{n+1},0)$, we define the \textbf{ramification ideal} of $f$ as the ideal $\mc{R}(f) \subseteq \ms{O}_n$ generated by the $n\times n$ minors of the Jacobian matrix of $f$,
\[\begin{pmatrix}
\dfrac{\p f_1}{\p x_1}	&	\dots		& \dfrac{\p f_{n+1}}{\p x_1}	\\
\vdots			&	\ddots	& \vdots 					\\
\dfrac{\p f_1}{\p x_n}	&	\dots		& \dfrac{\p f_{n+1}}{\p x_n}
\end{pmatrix}\] 

\begin{proposition}[\cite{Ishikawa_Survey}]\label{jacobian criterion frontal}
	Let $f\colon (\mb{C}^n,S) \to (\mb{C}^{n+1},0)$ be a smooth map germ with ramification ideal $\mc{R}(f)$.
	Then $f$ is a frontal map germ if and only if $\mc{R}(f)$ is a principal ideal.
\end{proposition}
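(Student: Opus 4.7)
The plan is to directly unpack what it means for the $n \times (n+1)$ Jacobian matrix $J(f)$ to have a well-defined kernel-of-transpose direction on a neighbourhood of $S$, and match this with the algebraic content of $\mc{R}(f)$ being principal. Write $M_i = (-1)^{i+1}\det J(f)_{\hat i}$ for the signed $n\times n$ minor obtained by deleting the $i$-th column, so that $\mc{R}(f)=(M_1,\dots,M_{n+1})$. The basic Cramer-type identity
\[
\sum_{i=1}^{n+1} M_i\,\frac{\partial f_i}{\partial x_j}=0\qquad (j=1,\dots,n),
\]
(obtained by expanding an $(n+1)\times(n+1)$ determinant with a repeated row) says precisely that the row vector $(M_1,\dots,M_{n+1})$ lies in the kernel of $df^\top$, and it generically spans this kernel. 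Any $1$-form $\omega=\omega_1 dz^1+\dots+\omega_{n+1}dz^{n+1}$ satisfies $\omega(df\cdot\xi)=0$ for all vector fields $\xi$ on $(\mb C^n,S)$ if and only if $(\omega_1,\dots,\omega_{n+1})\in\ker df^\top$.

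For the $(\Leftarrow)$ direction, assume $\mc{R}(f)=(h)$ and write $M_i=h\,\lambda_i$. Since $h\in(M_1,\dots,M_{n+1})$ we have $h=\sum a_i M_i=h\cdot\sum a_i\lambda_i$, so (assuming $f$ has generic rank $n$, which may be disposed of separately) $\sum a_i\lambda_i=1$. Hence $(\lambda_1,\dots,\lambda_{n+1})=\ms O_n$ and at each point of $S$ some $\lambda_i$ is a unit. Define
\[
\tilde f\colon (\mb C^n,S)\to PT^*\mb C^{n+1},\qquad \tilde f(x)=\bigl(f(x),[\lambda_1(x):\dots:\lambda_{n+1}(x)]\bigr),
\]
which is holomorphic because the $\lambda_i$ generate the unit ideal. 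The Cramer identity, divided by $h$ on the nonsingular open set and extended by continuity, yields $\sum\lambda_i\partial f_i/\partial x_j=0$, i.e.\ $\tilde f^*\alpha=0$. Thus $\tilde f$ is an integral lift of $f$, proving $f$ is frontal.

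For the $(\Rightarrow)$ direction, suppose $f$ is frontal with integral lift $F=(f,[\omega])$. Trivialise $\pi$ over a neighbourhood of $0\in\mb C^{n+1}$ so that $[\omega]=[\omega_1:\dots:\omega_{n+1}]$ with at least one $\omega_i$ a unit at each point of $S$; in particular $(\omega_1,\dots,\omega_{n+1})=\ms O_n$. Integrality $F^*\alpha=0$ gives $(\omega_1,\dots,\omega_{n+1})\in\ker df^\top$. On the nonsingular locus the kernel is spanned by $(M_1,\dots,M_{n+1})$, so there exists $h\in\ms O_n$ with $M_i=h\,\omega_i$ for each $i$; this relation extends to all of $(\mb C^n,S)$ by continuity/density since $\Sigma(f)$ is nowhere dense when $f$ has generic rank $n$. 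Consequently
\[
\mc{R}(f)=(M_1,\dots,M_{n+1})=h\cdot(\omega_1,\dots,\omega_{n+1})=(h),
\]
which is principal.

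The main obstacle is the bookkeeping about where $(M_1,\dots,M_{n+1})$ actually spans $\ker df^\top$: this fails on $\Sigma(f)$, and one needs to pass from the nonsingular open set to the full germ. In the proper frontal setting this is the standard density argument (the Nash lift is uniquely determined off $\Sigma(f)$, and $\Sigma(f)$ is nowhere dense), but it must be carried out at each point of $S$ and combined across branches of the multigerm; the degenerate case in which $f$ has rank $<n$ generically must be handled by restricting to the image ambient dimension, or excluded as trivial.
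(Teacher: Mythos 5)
The paper offers no proof of this proposition to compare against: it is quoted directly from Ishikawa's survey \cite{Ishikawa_Survey}. Your argument is, in substance, the standard proof behind that citation, and it is correct. The key mechanism is exactly right: the signed maximal minors satisfy the Cramer relation $\sum_i M_i\,\p f_i/\p x_j=0$ and span the annihilator of the columns of $df$ off $\Sigma(f)$; a generator $h$ of $\mc{R}(f)$ then yields cofactors $\lambda_i$ with $M_i=h\lambda_i$ generating the unit ideal, hence a holomorphic integral lift $[\lambda_1:\dots:\lambda_{n+1}]$, while conversely an integral lift $[\omega]$ with some $\omega_{i_0}$ a unit forces $M_i=h\,\omega_i$ with $h=M_{i_0}/\omega_{i_0}\in\ms{O}_n$, so $\mc{R}(f)=h\cdot(\omega_1,\dots,\omega_{n+1})=(h)$. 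The only point you should not leave as a closing remark is the one you flag yourself: both the cancellation of $h$ in $h=h\sum_i a_i\lambda_i$ and the extension of $M_i=h\,\omega_i$ from the regular locus rely on $\ms{O}_{n,s}$ being a domain together with $\Sigma(f)$ being nowhere dense on each branch, i.e.\ $f$ of generic rank $n$. This is precisely the ``proper'' hypothesis under which Ishikawa states the criterion and the only setting in which the paper applies it (finite, generically one-to-one germs), so either build it into the statement or handle the totally degenerate case $\mc{R}(f)=(0)$ explicitly; with that proviso the proof is complete.
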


\begin{example}
	\begin{enumerate}
		\item Let $\gamma\colon (\mb{C},0) \to (\mb{C}^2,0)$ be the analytic plane curve given by $\gamma(x)=(p(x),q(x))$, and assume $\ord q \geq \ord p$.
		We have $\mc{R}(\gamma)=\langle p' \rangle$, so $\gamma$ is a frontal map germ.

		\item The folded Whitney umbrella can be parametrized as
		\begin{funcion*}
			f\colon (\mb{C}^2,0) \arrow[r] & (\mb{C}^3,0)\\
			(x,y) \arrow[r, maps to] & (x,y^2,xy^3)
		\end{funcion*}
		We have $\mc{R}(f)=\langle y\rangle$, so $f$ is a frontal map germ.
		
		\item The $F_4$ singularity from Mond's classification \cite{Mond_Classification} can be parametrised as
		\begin{funcion*}
			f\colon (\mb{C}^2,0) \arrow[r] & (\mb{C}^3,0)\\
			(x,y) \arrow[r, maps to] & (x,y^2,y^5+x^3y)
		\end{funcion*}
		We have $\mc{R}(f)=\langle x^3,y\rangle$, which is not a principal ideal, so $f$ is not a frontal.
	\end{enumerate}
\end{example}

We now state a series of definitions and results that we shall use throughout this paper.
Proofs for these statements can be found in \cite{Crk1Frontals}.

We say that two smooth map germs $f,g\colon (\mb{C}^n,S) \to (\mb{C}^p,0)$ (not necessarily frontal) are $\ms{A}$-equivalent if there are diffeomorphisms $\phi\colon (\mb{C}^n,S) \to (\mb{C}^n,S)$ and $\psi\colon (\mb{C}^p,0) \to (\mb{C}^p,0)$ such that $g=\psi\circ f\circ \phi^{-1}$.
We also define the space
	\[T\ms{A}_ef=\left\{\left.\frac{df_t}{dt}\right|_{t=0}: f_t=\psi_t\circ f\circ \phi_t^{-1}\right\},\]
where $\phi_t\colon (\mb{C}^n,S) \to (\mb{C}^n,S)$ and $\psi_t\colon (\mb{C}^p,0) \to (\mb{C}^p,0)$ are families of diffeomorphisms and $t \in \mb{D}$.
This space can be computed algebraically as the sum $tf(\theta_n)+\omega f(\theta_p)$, where
\begin{funcion} \label{t y omega}
	tf\colon \theta_n \arrow[r] 	& \theta(f) 		&& \omega f\colon \theta_p \arrow[r] 	& \theta(f) \\
	\xi \arrow[r, maps to] 		& df\circ \xi 	&& 	\eta \arrow[r, maps to]		& \eta \circ f
\end{funcion}

\begin{proposition}\label{frontals preserved under A}
	Let $f,g\colon (\mb{C}^n,S) \to (\mb{C}^{n+1},0)$ be $\ms{A}$-equivalent holomorphic map germs.
	If $f$ is frontal, $g$ is frontal.
	Moreover, if $f$ is a germ of wave front, $g$ is a germ of wave front.
\end{proposition}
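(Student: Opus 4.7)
The plan is to attack the two claims separately: for the first, I would use the Jacobian criterion (Proposition \ref{jacobian criterion frontal}); for the second, I would construct the Nash lift of $g$ from that of $f$ via the canonical contact lift of $\psi$. Writing $g=\psi\circ f\circ \phi^{-1}$, the chain rule factors the Jacobian matrix of $g$ as
\[Jg=(J\phi^{-1})\cdot (Jf\circ \phi^{-1})\cdot (J\psi\circ f\circ \phi^{-1}),\]
where $J\phi^{-1}$ is an invertible $n\times n$ matrix and $J\psi\circ f\circ \phi^{-1}$ is an invertible $(n+1)\times(n+1)$ matrix, both over $\ms O_n$.

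The key algebraic observation is that left multiplication of an $n\times(n+1)$ matrix by an invertible $n\times n$ matrix rescales every $n\times n$ minor by a unit, and, by Cauchy--Binet, right multiplication by an invertible $(n+1)\times(n+1)$ matrix expresses each $n\times n$ minor of the product as an $\ms O_n$-linear combination of those of the original matrix (the reverse relation being obtained using the inverse matrix). These operations therefore preserve the ideal of $n\times n$ minors, so $\mc R(g)=(\phi^{-1})^{*}\mc R(f)\cdot \ms O_n$. Since $\phi$ is a biholomorphism, this ideal is principal if and only if $\mc R(f)$ is, and Proposition \ref{jacobian criterion frontal} yields the first claim.

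For the wave front statement, I would lift $\psi$ to the canonical contactomorphism $\tilde\psi\colon PT^*\mb C^{n+1}\to PT^*\mb C^{n+1}$ given fibrewise by $(z,[\omega])\mapsto (\psi(z),[(d\psi_z^{-1})^{*}\omega])$; a direct computation shows $\tilde\psi^{*}\alpha$ is a nonvanishing multiple of $\alpha$, so $\tilde\psi$ preserves the contact structure and therefore maps integral maps to integral maps. Setting $\tilde g:=\tilde\psi\circ \tilde f\circ \phi^{-1}$, one has $\pi\circ \tilde g=g$ and $\tilde g$ is integral. The crucial verification is that $\tilde g$ coincides with the intrinsic Nash lift of $g$: this holds on $N\setminus \Sigma(f)$ because $\tilde\psi$ sends the tangent hyperplane of $f(\mb C^n)$ at $f(\phi^{-1}(x))$ precisely to that of $g(\mb C^n)$ at $g(x)$, and extends by continuity to the whole source in the proper frontal setting. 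Since $\tilde\psi$ and $\phi^{-1}$ are diffeomorphisms, $\tilde g$ is an immersion if and only if $\tilde f$ is, which gives the second assertion. I expect the identification of $\tilde g$ with the intrinsic Nash lift to be the most delicate point, since a priori many integral lifts exist and one must exploit the uniqueness from the regular locus.
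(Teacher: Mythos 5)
The paper does not actually prove this proposition: it is one of the statements quoted from \cite{Crk1Frontals}, so there is no in-text argument to compare yours against. That said, your proof is correct and is essentially the standard one. For the first claim, the factorisation $Jg=(J\phi^{-1})\cdot(Jf\circ\phi^{-1})\cdot(J\psi\circ f\circ\phi^{-1})$ (consistent with the paper's row/column convention for the Jacobian) together with Cauchy--Binet applied to the invertible factor on each side does give $\mc{R}(g)=(\phi^{-1})^{*}\mc{R}(f)\cdot\ms{O}_n$, and principality is preserved under the ring isomorphism $(\phi^{-1})^{*}$, so Proposition \ref{jacobian criterion frontal} finishes the argument. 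For the second claim, the contact lift $\tilde\psi$ of $\psi$ is the right tool, and you correctly isolate the only delicate point: recognising $\tilde\psi\circ\tilde f\circ\phi^{-1}$ as the Nash lift of $g$. Your resolution (agreement on the regular locus, because $d\psi$ carries the tangent hyperplane of the image of $f$ at $f(\phi^{-1}(x))$ to that of the image of $g$ at $g(x)$, followed by unique extension) works precisely in the proper-frontal setting, which is the only setting in which the Nash lift is uniquely determined anyway; alternatively, one can sidestep the identification entirely by observing that being a wave front only requires the existence of \emph{some} immersive integral lift, and $\tilde\psi\circ\tilde f\circ\phi^{-1}$ is integral (since $\tilde\psi^{*}\alpha$ is a nonvanishing multiple of $\alpha$) and an immersion exactly when $\tilde f$ is. Either way the argument is complete.
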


Given a frontal map germ $f\colon (\mb{C}^n,S) \to (\mb{C}^{n+1},0)$, we define the space of infinitesimal frontal deformations of $f$ as
	\[\ms{F}(f):=\left\{\left.\frac{df_t}{dt}\right|_{t=0}: f_0=f, (f_t,t) \text{ frontal}\right\}\]
Using Proposition \ref{frontals preserved under A}, we see that $T\ms{A}_ef \subseteq \ms{F}(f)$.
In particular, this implies that $tf(\xi)$ and $\omega f(\eta)$ are in $\ms{F}(f)$ for all $\xi \in \theta_n$ and $\eta \in \theta_{n+1}$.

\begin{definition}
	We define the \textbf{frontal codimension} or $\ms{F}$-codimension of $f$ as
		\[\codim_{\ms{F}e}(f):=\dim_\mb{C}\frac{\ms{F}(f)}{T\ms{A}_ef}\]
	We say $f$ is $\ms{F}$-finite if $\codim_{\ms{F}_e}(f) < \infty$.
\end{definition}

Given a proper frontal map-germ $f\colon (\mb{C}^n,S) \to (\mb{C}^{n+1},0)$, we shall say $f$ has \textbf{corank at most $1$} if the maximum corank among its branches is $1$.
We also define the \textbf{integral corank} of $f$ as the corank of its Nash lift, which is defined in a similar fashion.

\begin{definition}
	A \textbf{frontal unfolding} of a frontal map germ $f\colon (\mb{C}^n,S) \to (\mb{C}^{n+1},0)$ is an unfolding $F\colon (\mb{C}^r\times \mb{C}^n,\{0\}\times S) \to (\mb{C}^r\times \mb{C}^{n+1},0)$ which is frontal as a map germ.
	A frontal map germ $f\colon (\mb{C}^n,S) \to (\mb{C}^{n+1},0)$ is stable as a frontal or $\ms{F}$-stable if every $r$-parameter frontal unfolding $F$ of $f$ is $\ms{A}$-equivalent to $f\times\id_{(\mb{C}^r,0)}$.
\end{definition}

\begin{theorem}
	Let $f\colon (\mb{C}^n,S) \to (\mb{C}^{n+1},0)$ be a proper frontal map germ with integral corank at most $1$.
	Then $f$ is stable as a frontal if and only if $\ms{F}(f)=T\ms{A}_ef$.
\end{theorem}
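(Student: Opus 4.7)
The plan is to adapt Mather's infinitesimal criterion for $\ms A$-stability to the frontal setting, carrying out both directions by a path method but keeping all deformations inside the frontal class.

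The direction $(\Rightarrow)$ is essentially a one-line differentiation. Given $\dot f\in\ms F(f)$, the definition supplies a $1$-parameter frontal unfolding $F(x,t)=(f_t(x),t)$ with $\tfrac{df_t}{dt}\bigr|_{t=0}=\dot f$. By $\ms F$-stability, $F$ is $\ms A$-equivalent as an unfolding to $f\times\id_{(\mb C,0)}$, so one can write $f_t=\psi_t\circ f\circ\phi_t^{-1}$ with $\phi_0=\id$ and $\psi_0=\id$. Setting $\xi=\dot\phi_0\in\theta_n$ and $\eta=\dot\psi_0\in\theta_{n+1}$ and differentiating at $t=0$ gives $\dot f=\eta\circ f-df\circ\xi\in \omega f(\theta_{n+1})+tf(\theta_n)=T\ms A_e f$.

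For the more substantial direction $(\Leftarrow)$ I would use the Thom--Levine homotopy method adapted to frontals. Given an arbitrary $r$-parameter frontal unfolding $F$ of $f$, consider the path $F^s(x,u)=F(x,su)$ for $s\in[0,1]$, joining $F^0=f\times\id_{(\mb C^r,0)}$ to $F^1=F$ through frontal unfoldings of $f$. The $s$-derivative of this path defines an infinitesimal frontal deformation of $F^s$; the aim is to express it as an element of $T\ms A_e F^s$ relative to the parameter $u$, and then integrate the associated source and target vector fields in $s$ to produce families $\Phi^s,\Psi^s$ of diffeomorphisms realising an $\ms A$-equivalence between $F$ and $F^0$. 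By Proposition~\ref{frontals preserved under A} these diffeomorphisms automatically respect the frontal structure.

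The hard step is promoting the absolute identity $\ms F(f)=T\ms A_e f$ to the relative statement needed for $F$ over $\ms O_r$, and this is where I expect the main obstacle to lie. As in Mather's own stability theorem, a Malgrange preparation argument is the natural tool: the properness hypothesis makes the Nash lift $\tilde f$ finite, and combined with the integral corank at most $1$ condition this should ensure that the module of infinitesimal frontal deformations of $F$ is finitely generated over $\ms O_{n+1+r}$ via $F$. Then Nakayama's lemma promotes the vanishing of $\ms F(f)/T\ms A_e f$ at the central fibre to the vanishing of the corresponding relative module on a neighbourhood of $0\in\mb C^r$, after which the integration in $s$ yields the desired trivialisation and hence $\ms F$-stability.
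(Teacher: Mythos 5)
The paper itself does not prove this theorem: it is stated as a quoted result with the proof deferred to \cite{Crk1Frontals}, so there is no in-text argument to compare against. Your strategy --- Mather's infinitesimal criterion, with the Thom--Levine homotopy method for the hard direction and Malgrange preparation plus Nakayama to pass between absolute and relative statements --- is the standard route and is in the spirit of the cited reference. There is, however, a genuine gap in the direction $(\Leftarrow)$, and a smaller one in $(\Rightarrow)$. The smaller one: the paper defines $\ms{F}$-stability as every frontal unfolding being $\ms{A}$-equivalent to $f\times\id$, not $\ms{A}$-equivalent \emph{as an unfolding}; your differentiation argument presupposes families $\phi_t,\psi_t$ compatible with the unfolding structure, so you must first upgrade the equivalence to a trivialisation of the unfolding (or argue that the two notions coincide here).

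The substantive gap is in $(\Leftarrow)$. You correctly isolate the hard step as promoting $\ms{F}(f)=T\ms{A}_ef$ to a relative statement for $F$, but you attribute the role of properness and integral corank at most $1$ to finite generation of the relative deformation module. Finite generation is not where the difficulty lies: the relative module sits inside $\theta(F/r)/tF(\theta_{n+r/r})$, which is finitely generated over the target ring by preparation for any finite $F$. What actually has to be established --- and what these hypotheses are for --- is that $\ms{F}(f)$ and its relative analogue $\ms{F}(F/r)$ carry $\ms{O}$-module structures at all (a priori they are merely sets of derivatives of frontal families, not even obviously closed under addition), and that restriction at $u=0$ induces an isomorphism of $\ms{F}(F/r)\otimes_{\ms{O}_r}\ms{O}_r/\mf{m}_r$ with $\ms{F}(f)$ compatibly with the tangent spaces; only then is there a module for Nakayama to act on. For proper frontals of integral corank at most $1$ this is achieved in \cite{Crk1Frontals} by parametrising frontal deformations through deformations of the data of the Nash lift (the pair $(p,\mu)$ in a prenormal form $f=(x,p,q)$ with $dq=\mu\,dp+\sum_i\lambda_i\,dx_i$), which simultaneously gives the module structure and the surjectivity of the restriction map. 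Your outline points in the right direction but omits this central ingredient, without which the preparation--Nakayama step cannot be set up.
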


\subsection{Frontal Milnor number} 
We define the frontal Milnor number, by using analogous arguments to those of Mond in \cite{Mond_VanishingCycles}. Let $f\colon (\mb{C}^n,S) \to (\mb{C}^{n+1},0)$ be a frontal map germ.

\begin{definition}
	A \textbf{frontal stabilisation}  of $f$ is a $1$-parameter frontal unfolding $F=(t,f_t)$ of $f$ such that $f_t$ has only stable frontal singularities, for all $t\ne0$ in a neighbourhood of the origin in $\mb{C}$.
\end{definition}

\begin{definition}	
	We say that $(n,n+1)$ are \textbf{frontal nice dimensions} if 
	\begin{enumerate}
	\item Any proper frontal map germ $f\colon (\mb{C}^n,S) \to (\mb{C}^{n+1},0)$ with isolated instabillity admits a frontal stabilisation.
	\item There exist only a finite number of $\ms{A}$-equivalence classes of frontal stable map germs $f\colon (\mb{C}^n,S) \to (\mb{C}^{n+1},0)$.
	\end{enumerate}
\end{definition}

In the real $\ms{C}^\infty$ case, the condition (1) is equivalent to saying that the stable frontal map germs are dense within the space of proper frontal map germs $(\mb{R}^n,S) \to (\mb{R}^{n+1},0)$ and the condition (2) follows from (1).
This notion is the equivalent within the framework of frontal map germs to Mather's nice dimensions (\cite{Mather_V}; see also \cite{MondNuno}, \S 5.2).

From now on, we assume that $(n,n+1)$ are frontal nice dimensions and that $f$ has isolated frontal instability. Let $F=(t,f_t)$ be a stabilisation and denote by $(\mathcal X,0)$ the image of $F$ and by $\pi\colon(\mathcal X,0)\to(\mb{C},0)$ the projection $\pi(t,y)=t$. For each $t$ in a neighbourhood of $0$ in $\mb{C}$, the fibre $X_t=\pi^{-1}(t)$ is the image of $f_t$. We fix in $(\mathcal X,0)$ the stratification by frontal stable types, which is well defined by condition (2) above.
By a theorem due to L\^e \cite{Le:1977}, for all $0<\delta\ll\epsilon\ll1$, the restriction
\[
\pi\colon \mathcal X\cap (D_\delta^*\times B_\epsilon)\longrightarrow D_\delta^*
\]
is a locally $\ms{C}^0$-trivial fibration. The fibre, $X_t\cap B_\epsilon$, with $0<|t|<\delta$, is called the \textbf{frontal disentanglement}. Here, 
\[
B_\epsilon=\{y\in\mb{C}^{n+1}\ |\ \|y\|\le\epsilon\},\quad D_\delta^*=\{t\in\mb{C}\ |\ 0<|t|<\delta\}.
\]

\begin{lemma} With the above notation, $\pi\colon(\mathcal X,0)\to(\mb{C},0)$ has isolated critical point in the stratified sense. In particular, the frontal disentanglement has the homotopy type of a wedge of $n$-spheres.
\end{lemma}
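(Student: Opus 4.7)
The plan has two parts: first I would establish the stratified isolated critical point property, and second deduce the wedge of $n$-spheres conclusion from a standard Milnor--Lê fibration argument.

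For the first part, fix on $(\mathcal X,0)$ the stratification by frontal stable types, which is well-defined because the second clause of frontal nice dimensions guarantees only finitely many $\ms A$-equivalence classes of frontal stable germs. Let $(t_0,y_0)\in\mathcal X$ be a point near the origin with $t_0\ne 0$. Because $F$ is a frontal stabilisation, the multigerm of $f_{t_0}$ at $f_{t_0}^{-1}(y_0)$ is frontal stable. The germ of $F$ at the corresponding preimage points is therefore a one-parameter frontal unfolding of this stable multigerm, so by the characterisation of $\ms F$-stability recalled just before the subsection (the equality $\ms F(f)=T\ms A_ef$) it is $\ms A$-equivalent to the trivial unfolding $f_{t_0}\times\id_{(\mb{C},t_0)}$. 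Hence near $(t_0,y_0)$ the image $\mathcal X$ is a product of the germ $(X_{t_0},y_0)$ with the $t$-axis, and the stable-type stratification inherits this product structure. Every stratum through $(t_0,y_0)$ therefore splits off a factor of the $t$-axis, and $\pi$ restricts to it as a submersion. Consequently $0$ is the only stratified critical point of $\pi$.

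For the second part, $(\mathcal X,0)$ is a hypersurface germ in $\mb{C}^{n+2}$ of pure complex dimension $n+1$, and $\pi$ is a holomorphic function on it with isolated stratified critical point at the origin. Applying Lê's theorem \cite{Le:1977} already cited in the statement, the restriction $\pi\colon \mathcal X\cap (D_\delta^*\times B_\epsilon)\to D_\delta^*$ is a locally $\ms C^0$-trivial fibration whose generic fibre $X_t\cap B_\epsilon$ has the homotopy type of a finite CW-complex of real dimension at most $n$. To upgrade this to a bouquet of $n$-spheres I would follow Mond's argument in \cite{Mond_VanishingCycles}: the frontal disentanglement is the Milnor fibre of a hypersurface singularity, hence is $(n-1)$-connected, and a CW-complex of dimension at most $n$ that is $(n-1)$-connected is homotopy equivalent to a wedge of $n$-spheres by a standard application of Whitehead's theorem to the map into the top-dimensional Eilenberg--MacLane wedge.

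The main obstacle is the $(n-1)$-connectedness of the frontal disentanglement. This is the substantive homotopical ingredient in Mond's original argument for the image Milnor number, and I expect the adaptation to the frontal setting to require essentially the same analysis, building the disentanglement inductively from its stratification by frontal stable types and controlling the connectivity of each attaching step. By contrast, the first part is the place where the frontal hypotheses enter cleanly and relies only on local triviality of frontal stable unfoldings.
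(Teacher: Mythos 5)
Your first part is essentially the paper's own argument: frontal stability of $f_t$ at every point of $\mathcal X\setminus\{0\}$ gives local triviality of the unfolding $F$, hence a local product structure $\mb{C}\times X_{t_0}$ compatible with the stratification by stable types, so $\pi$ is a stratified submersion away from the origin. One small omission: you only treat points with $t_0\ne 0$, but to conclude that $0$ is the \emph{only} stratified critical point you must also handle points $(0,y_0)\in X_0$ with $y_0\ne 0$; these are covered by the hypothesis that $f=f_0$ has \emph{isolated} frontal instability, so the same triviality argument applies there. The paper states the choice of representative so that both cases are covered at once.

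The second part contains a genuine gap. You reduce the bouquet statement to the $(n-1)$-connectedness of $X_t\cap B_\epsilon$ and justify it by saying the frontal disentanglement ``is the Milnor fibre of a hypersurface singularity, hence is $(n-1)$-connected.'' That is not correct: the classical connectivity theorem applies to the (smooth) Milnor fibre of a function on $\mb{C}^{n+1}$, whereas here $X_t\cap B_\epsilon$ is the fibre of the function $\pi$ restricted to the \emph{singular} space $\mathcal X$, and this fibre is itself singular (e.g.\ for $E_6$ it is a curve with two cusps and a node). So the substantive homotopical ingredient you flag as ``the main obstacle'' is exactly the point your sketch does not supply, and the route you propose for it would fail. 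The paper closes this gap by invoking the bouquet theorem of Hamm and L\^e \cite{Le-Hamm:2020}, which applies to a holomorphic function with isolated stratified critical point on a hypersurface (more generally, a local complete intersection) and directly yields that the nearby fibre has the homotopy type of a wedge of spheres of the correct dimension; no separate connectivity argument in the style of Mond's original induction is needed once the stratified isolated-singularity property from the first part is established.
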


\begin{proof} 
The fact that the fibre $X_t\cap B_\epsilon$, with $0<|t|<\delta$, has the homotopy type of a wedge of $n$-spheres, follows from a result of Hamm and L\^e \cite{Le-Hamm:2020}, once we know that $(\mathcal X,0)$ is a hypersurface and $\pi\colon(\mathcal X,0)\to(\mb{C},0)$ has isolated critical point in the stratified sense.

In order to prove that $\pi\colon(\mathcal X,0)\to(\mb{C},0)$ has isolated critical point we fix a representative $\mathcal X$ such that if $t\ne 0$, $f_t$ has only frontal stable singularities on $X_t$ and $f_0=f$ has only frontal stable singularities on $X_0\setminus\{0\}$. Thus, for all $(t,y)\in\mathcal X\setminus\{0\}$, $f_t$ is frontal stable at $y\in X_t$. Hence, $F$ is a trivial unfolding at $(t,y)$ and there exists a germ of diffeomorphism $\Psi=(t,\psi_t)$ which sends $\mathcal X$ into $\mb C\times X_0$. Since $\Psi$ must preserve the strata, we conclude that $\pi$ is a stratified submersion at $(t,y)$. 
\end{proof}

\begin{definition} The number of spheres of the frontal disentanglement is called the \textbf{frontal Milnor number} and is denoted by $\mu_{\ms{F}}(f)$.
\end{definition}

The frontal Milnor number of $f$ can be computed using a formula due to Siersma:

\begin{theorem}[\cite{Siersma}, Theorem 2.3]
With the above notation, suppose that $G\in\ms{O}_{n+2}$ is a reduced equation of $(\mathcal X,0)$. Then, the number of spheres of the fibre $X_t\cap B_\epsilon$, with $0<|t|<\delta$, is equal to 
		\[\sum_{y \in B_\eps \backslash X_t}\mu(g_t;y),\]
where $g_t(y)=G(t,y)$ and $\mu(g_t;y)$ is the classical Milnor number of $g_t$ at $y$.
\end{theorem}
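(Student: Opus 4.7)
The plan is to compute $\chi(X_t \cap B_\eps)$ in two different ways and compare. On one hand, by the previous lemma $X_t \cap B_\eps$ has the homotopy type of a wedge of $\mu_{\ms F}(f)$ $n$-spheres, so
\[
\chi(X_t \cap B_\eps) = 1 + (-1)^n \mu_{\ms F}(f).
\]
On the other hand, I would compute the same Euler characteristic via vanishing-cycle arguments applied to the holomorphic function $g_t\colon B_\eps \to \mb{C}$, whose zero fibre is precisely $X_t \cap B_\eps$.

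Concretely, fix a generic $\eta \in \mb{C}$ with $0 < |\eta| \ll 1$ and let $Y_\eta = g_t^{-1}(\eta) \cap B_\eps$ denote the nearby fibre. For $0 < |t| < \delta$, the critical points of $g_t$ lying in $B_\eps \setminus X_t$ are isolated and confined to a small neighbourhood of the origin, so the sum $\sum_{y \in B_\eps \setminus X_t} \mu(g_t; y)$ is a finite sum of classical Milnor numbers. The strategy is then to combine the global Morse identity on the ball,
\[
\chi(B_\eps) - \chi(Y_\eta) = (-1)^{n+1} \sum_{y \in \Sigma g_t \cap B_\eps} \mu(g_t; y),
\]
with the local vanishing-cycle identity at the critical value $0$,
\[
\chi(Y_\eta) - \chi(X_t \cap B_\eps) = (-1)^n \sum_{y \in X_t \cap \Sigma g_t} \mu(g_t; y).
\]
Adding the two, the on-$X_t$ contribution cancels, and since $\chi(B_\eps) = 1$,
\[
1 - \chi(X_t \cap B_\eps) = (-1)^{n+1} \sum_{y \in B_\eps \setminus X_t} \mu(g_t; y).
\]
Comparing this with the first expression for $\chi(X_t \cap B_\eps)$ immediately yields the desired formula $\mu_{\ms F}(f) = \sum_{y \in B_\eps \setminus X_t} \mu(g_t; y)$.

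The main obstacle is that for $n \ge 2$ the critical locus of $g_t$ along $X_t$ can be positive-dimensional (along singular strata such as double loci or Whitney-umbrella-type curves), so the classical Milnor numbers $\mu(g_t; y)$ at points $y \in X_t \cap \Sigma g_t$ are not even defined, and the two identities above must be reinterpreted in the sense of nonisolated hypersurface singularities. This is exactly the framework developed by Siersma, and his Theorem 2.3 organises the non-isolated on-$X_t$ contribution via transversal Milnor numbers along the critical strata while extracting the isolated off-$X_t$ contribution as the one that computes the vanishing topology of the special fibre; I would therefore appeal to that result to close the argument, having only to check that the stratified-isolated-critical-point hypothesis on $\pi|_{\ms X}$ established in the previous lemma matches Siersma's hypotheses on $G$.
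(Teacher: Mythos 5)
This theorem is stated in the paper as a direct citation of Siersma's Theorem~2.3 and is given no proof there, so your proposal cannot diverge from ``the paper's proof''; and indeed your argument ultimately closes by invoking Siersma's Theorem~2.3 itself, which makes it circular as a standalone proof but perfectly consistent with the paper's treatment. What you add is the correct Euler-characteristic heuristic, and you correctly locate both the real difficulty (the critical locus of $g_t$ along $X_t$ is positive-dimensional, so the ``local identity at the critical value $0$'' cannot be written with classical Milnor numbers) and the one substantive thing that must be verified to apply Siersma, namely that $\pi|_{\mathcal X}$ has an isolated stratified critical point so that the critical points of $g_t$ in $B_\eps\setminus X_t$ are isolated and finite in number --- which is exactly what the preceding lemma establishes. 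A small improvement to your outline: rather than writing a separate identity at the value $0$ and arguing that the on-$X_t$ terms cancel, one can telescope directly, $1=\chi(Y_\eta)+\sum_c\bigl(\chi(g_t^{-1}(c)\cap B_\eps)-\chi(Y_\eta)\bigr)$ over the critical values $c$, so that the $c=0$ term combines with $\chi(Y_\eta)$ to give $\chi(X_t\cap B_\eps)$ and only the isolated critical points with $c\neq 0$ ever need a Milnor number; this sidesteps having to ``reinterpret'' anything along $X_t$, though the tube-retracts-to-fibre steps still require the careful choices of $\eps$ and $\delta$ that Siersma's paper supplies.
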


The notion of frontal Milnor number was first introduced in \cite{FrontalSurfaces}, wherein we formulate a frontal version of the conjecture stated by D. Mond in \cite{Mond_VanishingCycles}:

\begin{conjecture}\label{mond conjecture frontal}
	Let $f\colon (\mb{C}^n,S) \to (\mb{C}^{n+1},0)$ be an $\ms{F}$-finite frontal map germ.
	If $(n,n+1)$ is in the frontal nice dimensions,
		\[\mu_{\ms{F}}(f) \geq \codim_{\ms{F}_e}f,\]
	with equality if $f$ is quasihomogeneous.
\end{conjecture}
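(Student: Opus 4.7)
The plan is to follow the Jacobian-module strategy outlined in the introduction, which parallels the Fernández de Bobadilla--Nuño-Ballesteros--Peñafort Sanchis approach to the classical Mond conjecture. If $g \in \ms{O}_{n+1}$ is a reduced equation for the image $X = f(\mb{C}^n,S)$, I would first define a \emph{frontal Jacobian module} $M_\ms{F}(g)$ encoding precisely those infinitesimal deformations of the hypersurface $X$ that arise from frontal deformations of $f$. Its construction should be the natural frontal analogue of the module used in the image case, built from a suitable quotient of $\ms{O}_{n+1}$ by the ideal generated by $g$ and by the action of derivations that respect the frontal (contact) structure coming from the Nash lift $\tilde f$.

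Having $M_\ms{F}(g)$ in hand, I would prove the inequality
\[
\dim_\mb{C} M_\ms{F}(g) \ge \codim_{\ms{F}_e}(f),
\]
with equality in the weighted homogeneous case, by constructing a surjective $\mb{C}$-linear map $M_\ms{F}(g) \twoheadrightarrow \ms{F}(f)/T\ms{A}_e f$ along the lines of the Damon--Mond isomorphism between the Jacobian module and $\ms{A}_e$-normal space. The standard Euler-vector-field trick should give that the kernel vanishes when $g$ (equivalently $f$) is quasihomogeneous.

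Next I would introduce a relative version $M_y(G)$ associated with the reduced equation $G$ of the image $\mathcal{X}$ of an $r$-parameter frontal unfolding $F$ of $f$, and establish two key structural properties: (i) the base-change isomorphism $M_y(G)\otimes_{\ms{O}_r}\mb{C} \cong M_\ms{F}(g)$, and (ii) when $F$ is a frontal stabilisation, the Samuel multiplicity identity
\[
\mu_\ms{F}(f) = e(\mathfrak m_r;\,M_y(G)).
\]
Property (ii) would follow by combining the Siersma formula recalled above with the additivity of Samuel multiplicity over a stratification of $\mathcal{X}$ by frontal stable types, using that on each positive-dimensional stratum the relative Jacobian module localises to a module supported on a multigerm of a stable frontal, whose contribution one can compute explicitly.

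The heart of the argument---and the principal obstacle---is to prove that $M_y(G)$ is Cohen--Macaulay of dimension $r$ over $\ms{O}_r$. Once this is known, the general inequality $\dim_\mb{C}(M \otimes_{\ms{O}_r}\mb{C}) \ge e(\mathfrak m_r;M)$ becomes an equality, giving
\[
\mu_\ms{F}(f) \;=\; e(\mathfrak m_r;\,M_y(G)) \;=\; \dim_\mb{C} M_\ms{F}(g) \;\ge\; \codim_{\ms{F}_e}(f),
\]
with equality when $f$ is quasihomogeneous. For wave fronts this Cohen--Macaulay property can be imported from Damon--Mond, because the Nash lift is an immersion and the image of $f$ can be identified with the discriminant of an auxiliary map germ whose module of logarithmic derivations is free in the sense of Saito---this is exactly the route the paper's first proof takes. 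For a \emph{general} frontal, however, the image is typically not a free divisor, and no such reduction is available. I expect this step to require a genuinely new input: perhaps exhibiting a finite free resolution of $M_y(G)$ of the right length (a frontal Buchsbaum--Rim complex built from the Nash lift), or a depth estimate obtained from a flatness statement for the relative lift $\tilde F$ over the parameter axis. Without such a tool the inequality side of the conjecture still goes through whenever $\mu_\ms{F}(f)$ and $\dim_\mb{C} M_\ms{F}(g)$ can be shown to upper-bound each other by a conservation-of-number argument along the stabilisation, but the equality in the weighted homogeneous case genuinely needs the Cohen--Macaulayness of $M_y(G)$, and that is where the conjecture in full generality currently stands or falls.
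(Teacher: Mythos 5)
Your proposal follows essentially the same route as the paper's second proof: $M_\ms{F}(g)$ is realised as $M_y(G)\otimes_{\ms{O}_r}\ms{O}_r/\mf{m}_r$ with $M_y(G)=J(G)/J_y(G)$ for a good defining equation $G$ of a stable frontal unfolding, the frontal codimension is the dimension of the further quotient by $(G)$ (Corollary \ref{frontal codimension good equation}), the multiplicity identity is Theorem \ref{frontal Milnor multiplicity}, and the Cohen--Macaulayness of $M_y(G)$ for wave fronts comes from freeness of $\Der(-\log \ms{X})$ (via Looijenga, Corollary \ref{wave fronts are free divisors}) combined with Pellikaan's lemma --- not, as you suggest, by importing it from Damon--Mond, which is the paper's \emph{first} proof and bypasses the Jacobian module entirely. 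One substantive correction to your diagnosis of where the general case fails: Cohen--Macaulayness is not the only wave-front-specific ingredient. The proof of $\mu_\ms{F}(f)=e(\mf{m}_r;M_y(G))$ requires that the local contributions of $M_y(G)$ at points lying \emph{on} the image $V_w$ of a stable perturbation vanish, which the paper deduces from the fact that stable wave fronts are quasihomogeneous (Arnold's ADE classification, resp.\ $A_\ell$ singularities in corank $1$, Remark \ref{Ak quasihomogeneous}); for a general frontal the stable local models need not be quasihomogeneous, so even the multiplicity identity --- and hence your claim that the inequality ``still goes through'' by conservation of number --- is not yet secured. Finally, your description of $M_\ms{F}(g)$ as a quotient of $\ms{O}_{n+1}$ by derivations respecting the contact structure is not quite the paper's definition (which is entirely in terms of the relative Jacobian ideal of $G$), though the object you are after is the same.
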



Conjecture \ref{mond conjecture frontal} has so far been proven for $n=1$ in \cite{Crk1Frontals}, where it can be derived from an expression for the $\ms{F}_e$-codimension of a plane curve.
Our aim in this article is to prove Conjecture \ref{mond conjecture frontal} for all wave fronts within the range of the nice dimensions.

\begin{remark} \label{conjecture Legendrian}
	Assume that the map germ $f$ in Conjecture \ref{mond conjecture frontal} is a proper germ of wave front.
	A $k$-parameter unfolding $F=(f_u,u)$ of $f$ is frontal if and only if $f_t$ lifts into an integral deformation of $\tilde f$ for all $t$ in an open neighbourhood $U \subseteq \mb{C}^k$ of $0$ (\cite{Crk1Frontals}, Theorem 3.8).
	The frontal map germ $f$ is a germ of wave front if and only if $\tilde f$ is an immersion, meaning that $\widetilde{f_t}$ is also an immersion and $f_t$ is a wave front for all $t \in U$.
	It follows that $\Delta_\ms{F}(f)$ is a wave front.
\end{remark}

\subsection{Generating families}
In their theory of deformations of wave fronts, Arnold and his colleagues proved that every germ of wave front $(X,0)$, $X \subseteq \mb{C}^{n+1}$ can be generated in terms of a smooth family of function germs
	\[g\colon (\mb{C}^k\times \mb{C}^r,0) \to (\mb{C},0)\]
with smooth critical set (see \cite{Arnold_I}, Chapter 19), which they called \emph{Morse families} of functions.
If one then considers the unfolding $G\colon (\mb{C}^k\times \mb{C}^r,0) \to (\mb{C}^k\times \mb{C},0)$ associated to this smooth family,
	\[G(u,x)=(u,g(u,x)),\]
then $(X,0)$ is equal to the germ at $0$ of the discriminant of $G$.

The original proof by Arnold and his colleagues is based on the fact that there is a correspondence between germs of caustics in $\mb{C}^n$ and germs of wave fronts in $\mb{C}^{n+1}$, sometimes called the \emph{symplectification functor}.
We now wish to give an alternative proof of this fact which does not require prior knowledge of symplectic topology and outlines a method to compute $G$ using a normalisation of $(X,0)$.

\begin{lemma}[Rank Distribution]\label{mu y k}
	Let $f\colon (\mb{C}^n,S) \to (\mb{C}^{n+1},0)$ be a germ of wave front of corank $k > 0$ in the form
		\[f(x,y)=(x,p_1(x,y),\dots,p_k(x,y),q(x,y)),\]
	with $x \in \mb{C}^{n-k}$ and $y \in \mb{C}^k$.
	If we denote the Nash lift of $f$ as
		\[\tilde f(x,y)=(f(x,y),\lambda_1(x,y),\dots,\lambda_{n-k}(x,y),\mu_1(x,y),\dots,\mu_k(x,y)),\]
	then the matrix
	\[\frac{\p \mu}{\p y}=
	\begin{pmatrix}
		\dfrac{\p \mu_1}{\p y_1} & \dots & \dfrac{\p \mu_1}{\p y_k} \\
		\vdots & \ddots & \vdots \\ 
		\dfrac{\p \mu_k}{\p y_1} & \dots & \dfrac{\p \mu_k}{\p y_k}
	\end{pmatrix}\]
	is invertible in a neighbourhood $U \subseteq \mb{C}^n$ of $0$.
\end{lemma}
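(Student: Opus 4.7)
The plan is to translate the integrality condition $\tilde f^*\alpha = 0$ into two explicit algebraic identities between the components of $f$ and the fibre coordinates $\lambda,\mu$, and then to combine the wave-front hypothesis (that $\tilde f$ be an immersion) with the corank--$k$ normal form to pin down the rank of $\p\mu/\p y$ at the origin. First, I would work in the affine chart of $PT^*\mb{C}^{n+1}$ in which the homogeneous coordinate dual to $dz^{n+1}$ is normalised to $-1$; this is the chart implicit in the statement, since $\tilde f$ is written with $2n+1$ coordinates. In that chart the contact form reads $\alpha = \sum_{i=1}^{n-k}\lambda_i\,dz^i + \sum_{j=1}^{k}\mu_j\,dz^{n-k+j} - dz^{n+1}$, and extracting the coefficients of $dx_l$ and $dy_m$ from $\tilde f^*\alpha = 0$ gives, on a neighbourhood of $0$, the two systems
\[
(A)\quad \lambda_l = \frac{\p q}{\p x_l} - \sum_{j=1}^k \mu_j\frac{\p p_j}{\p x_l},
\qquad
(B)\quad \sum_{j=1}^k \mu_j\frac{\p p_j}{\p y_m} = \frac{\p q}{\p y_m}.
\]

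Next, I would exploit the corank hypothesis. Since $f=(x,p,q)$ has corank exactly $k$ at $0$ and the first $n-k$ columns of $Jf|_0$ already contribute rank $n-k$, both $\p p/\p y|_0$ and $\p q/\p y|_0$ must vanish. Row-reducing $J\tilde f|_0$ using the identity block $I_{n-k}$ in its top-left corner then shows that the immersion hypothesis on $\tilde f$ is equivalent to the $n\times k$ stacked matrix
\[
\begin{pmatrix} \p\lambda/\p y|_0 \\ \p\mu/\p y|_0 \end{pmatrix}
\]
having rank $k$, so the goal is reduced to showing that the bottom block alone determines this rank.

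The heart of the argument is then a short calculation linking the two blocks. Differentiating $(A)$ with respect to $y_m$ and $(B)$ with respect to $x_l$, evaluating both at the origin, and using $\p p/\p y|_0 = 0$ to kill one summand in the derivative of $(B)$, the $\p^2 q/\p x_l\p y_m|_0$ term appearing in the derivative of $(A)$ can be rewritten as $\sum_j \mu_j(0)\,\p^2 p_j/\p x_l\p y_m|_0$. This new term then cancels with the analogous term already present in the derivative of $(A)$, by equality of mixed partials, leaving the clean identity
\[
\frac{\p\lambda}{\p y}\bigg|_0 = -\left(\frac{\p p}{\p x}\bigg|_0\right)^T \frac{\p\mu}{\p y}\bigg|_0.
\]
Consequently the rank of the stacked matrix equals the rank of $\p\mu/\p y|_0$, so the immersion condition forces the $k\times k$ matrix $\p\mu/\p y|_0$ to be invertible. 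Because $\det(\p\mu/\p y)$ is holomorphic and nonzero at $0$, it is nonzero on an open neighbourhood $U$ of $0$, which is exactly the claim.

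The only subtle point is the cancellation of second-derivative terms in the displayed identity: it is really the footprint of the contact condition on $\tilde f$ (essentially $\tilde f^*(d\alpha) = 0$), and it is precisely what allows the immersion condition to be encoded in the bottom block $\p\mu/\p y$ alone. Everything else reduces to straightforward linear algebra on the Jacobian of $\tilde f$ and continuity of the determinant.
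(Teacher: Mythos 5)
Your argument is correct and follows essentially the same route as the paper: both differentiate the two integrality relations $\lambda_l=\p q/\p x_l-\sum_j\mu_j\,\p p_j/\p x_l$ and $\sum_j\mu_j\,\p p_j/\p y_m=\p q/\p y_m$ to obtain $\p\lambda_l/\p y_m=\sum_j\bigl(\p\mu_j/\p x_l\cdot\p p_j/\p y_m-\p\mu_j/\p y_m\cdot\p p_j/\p x_l\bigr)$, and then extract invertibility of $\p\mu/\p y$ at the origin from the immersion and corank-$k$ hypotheses via a rank count on $J\tilde f|_0$. The only cosmetic difference is that the paper first normalises coordinates so that the $1$-jets of the $p_j$ vanish, giving $\p\lambda/\p y|_0=0$ outright, whereas you keep $\p p/\p x|_0$ general and observe that the rows of $\p\lambda/\p y|_0$ lie in the row space of $\p\mu/\p y|_0$, which yields the same conclusion.
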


\begin{proof}
	The mapping $f$ verifies the identity
		\[dq=\lambda_1\,dx_1+\dots+\lambda_{n-k}\,dx_{n-k}+\mu_1\,dp_1+\dots+\mu_k\,dp_k,\]
	from which follows that
	\begin{align*}
		\lambda_i=\frac{\p q}{\p x_i}-\mu_1\frac{\p p_1}{\p x_i}-\dots-\mu_k\frac{\p p_1}{\p x_k}; && \frac{\p q}{\p y_\ell}=\mu_1\frac{\p p_1}{\p y_\ell}+\dots+\mu_k\frac{\p p_k}{\p y_\ell}.
	\end{align*}
	for $i=1,\dots,n-k$ and $\ell=1,\dots,k$.
	Taking partial derivatives on both expressions yields
	\begin{align}
		\frac{\p \lambda_i}{\p y_\ell}=&\frac{\p^2 q}{\p y_\ell \p x_i}-\sum^k_{j=1}\left(\frac{\p \mu_j}{\p y_\ell}\frac{\p p_j}{\p x_i}+\mu_j\frac{\p^2 p_j}{\p y_\ell \p x_i}\right); \label{derivative lambda}\\
		 \frac{\p^2 q}{\p x_i\p y_\ell}=&\sum^k_{j=1}\left(\frac{\p\mu_j}{\p x_i}\frac{\p p_j}{\p y_\ell}+\mu_j\frac{\p^2 p_j}{\p x_i\p y_\ell}\right). \label{derivative q 2}
	\end{align}
	Combining \eqref{derivative lambda} and \eqref{derivative q 2}, we deduce that
	\begin{align} \label{derivative lambda 2}
		\frac{\p \lambda_i}{\p y_\ell}=&\sum^k_{j=1}\left(\frac{\p\mu_j}{\p x_i}\frac{\p p_j}{\p y_\ell}-\frac{\p \mu_j}{\p y_\ell}\frac{\p p_j}{\p x_i}\right).
	\end{align}
	Since $f$ has corank $k > 0$, we can choose coordinates in the source and target such that the $1$-jets of the functions $p_1,\dots,p_k$ vanish at the origin; this is, that
		\[\frac{\p p_j}{\p x_i}(0,0)=\frac{\p p_j}{\p y_\ell}(0,0)=0\]
	for $j,\ell=1,\dots,k$ and $i=1,\dots,n-k$.
	Therefore, it follows from \eqref{derivative lambda 2} that the functions $\lambda_1,\dots,\lambda_{n-k}$ have no linear term in $y$.
	
	Since $\tilde f$ is an immersion and $f$ has corank $k$, it follows that there exist $\mathbf{A} \in M_{k\times 1}(\ms{O}_{n-k})$, $\mathbf{B} \in M_{k\times k}(\ms{O}_n)$ such that $\mathbf{B}(0,0) \in \mathrm{GL}_k(\mb{C})$ and
	\[\begin{pmatrix}
		\mu_1(x,y)\\ \vdots \\ \mu_k(x,y)
	\end{pmatrix}
		=\mathbf{A}(x)+\mathbf{B}(x,y)
	\begin{pmatrix}
		y_1\\ \vdots \\ y_k
	\end{pmatrix}
	\]
	In particular, there is a $\mathbf{C} \in M_{k\times k}(\ms{O}_n)$ such that
	\[\frac{\p \mu}{\p y}(x,y)=\mathbf{B}(x,y)+\mathbf{C}(x,y)
		\begin{pmatrix}
			y_1\\ \vdots \\ y_k
		\end{pmatrix}
		\implies
		\det\left(\frac{\p \mu}{\p y}\right)(0,0)=\det(\mathbf{B})(0,0) \neq 0.
	\]
	Therefore, there is a neighbourhood $U \subseteq \mb{C}^n$ of $(0,0)$ such that $\det(\p \mu/\p y)(x,y)\neq 0$ for all $(x,y) \in U$.
\end{proof}

\begin{theorem}\label{generating family wave front}
	Let $(X,0)$ be a singular germ of wave front, $X \subseteq \mb{C}^{n+1}$.
	There exists a smooth map germ $h\colon (\mb{C}^N,T) \to (\mb{C}^{n+1},0)$, $N \geq n+1$, verifying the following conditions:
	\begin{enumerate}
		\item the set of critical points of $h$ is smooth;
		\item the discriminant of $h$ is equal to $(X,0)$.
	\end{enumerate}
	We call $h$ the \textbf{generating family} of $(X,0)$.
\end{theorem}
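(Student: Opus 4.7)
The plan is to start from a wave-front parametrisation of $(X,0)$ and build $h$ explicitly from it, branch by branch, using the invertibility of the derivative of the Nash-lift coordinates supplied by Lemma~\ref{mu y k}. Concretely, I would pick a proper germ of wave front $f\colon(\mb C^n,S)\to(\mb C^{n+1},0)$ with $f(\mb C^n,S)=(X,0)$, treat each branch of $f$ separately, and finally assemble $h$ as the disjoint union over $S$ of the branchwise constructions. Since $(X,0)$ is singular at least one branch of $f$ has positive corank, so the source dimension of the resulting map will automatically satisfy $N\ge n+1$.

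For a branch of corank $k\ge 1$, Lemma~\ref{mu y k} lets us assume, after source and target coordinate changes, that $f(x,y)=(x,p_1(x,y),\dots,p_k(x,y),q(x,y))$ with Nash lift $\tilde f=(f,\lambda,\mu)$ satisfying the Legendrian identity $\p q/\p y_\ell=\sum_j\mu_j\,\p p_j/\p y_\ell$ for $\ell=1,\dots,k$, and such that $\p\mu/\p y$ is invertible at $0$. I would then introduce auxiliary parameters $s=(s_1,\dots,s_k)\in\mb C^k$ and define the candidate
\[
h(x,y,s)=\Bigl(x,\;p_1(x,y)-s_1,\,\dots,\,p_k(x,y)-s_k,\;q(x,y)-\sum_{j=1}^{k}\mu_j(x,y)\,s_j\Bigr),
\]
so that $h(x,y,0)=f(x,y)$. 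If one can show that $\Sigma(h)=\{s=0\}$, then the discriminant $h(\Sigma(h))$ is exactly the branch $f(\mb C^n,0)$, and both conditions of the theorem are satisfied for this branch.

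The key verification, and the step I expect to be the technical heart of the argument, is that $\Sigma(h)$ really equals $\{s=0\}$. After using the identity block coming from the first $n-k$ target coordinates to eliminate the $x$-rows of the Jacobian, the rank of $dh$ is governed by a $(2k)\times(k+1)$ block whose first $k$ columns are those of $\bigl(\p p/\p y,-I_k\bigr)^{\!T}$ and whose last column is made of the $y$- and $s$-derivatives of $q-\sum_j\mu_j s_j$. Performing the column operation that subtracts from the last column the combination $\sum_{j=1}^k\mu_j$ times the $j$-th column kills the $s_j$-entries of that column, and the Legendrian identity reduces the $y_\ell$-entry to $-\sum_j(\p\mu_j/\p y_\ell)s_j$. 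Invertibility of $\p\mu/\p y$ then shows that this last column vanishes precisely on $\{s=0\}$, so $dh$ has maximal rank $n+1$ off $\{s=0\}$ and drops by exactly one on $\{s=0\}$. Hence $\Sigma(h)=\{s=0\}$ is smooth and $h(\Sigma(h))$ is the branch in question.

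For the remaining corank-zero branches of $f$, which parametrise smooth components of $(X,0)$, I would use the trivial padded construction $h(x,s)=(x_1,\dots,x_n,\,q(x)+s^2)$, whose critical set is the smooth hyperplane $\{s=0\}$ and whose discriminant is the graph $\{(x,q(x))\}$. Taking the disjoint union of the maps produced for every branch then yields a holomorphic map germ $h\colon(\mb C^N,T)\to(\mb C^{n+1},0)$ with $N\ge n+1$, smooth critical set, and discriminant equal to $(X,0)$, as required.
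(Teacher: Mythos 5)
Your construction is essentially the paper's: the source substitution $u_j=p_j(x,y)-s_j$ turns your $h$ into the paper's generating family $h(u,x,y)=\bigl(x,u,q+\sum_{i}\mu_i(u_i-p_i)\bigr)$, and the heart of the verification --- computing the critical set via the Legendrian identity $\p q/\p y_\ell=\sum_j\mu_j\,\p p_j/\p y_\ell$ and the invertibility of $\p\mu/\p y$ from Lemma \ref{mu y k} --- is identical, your $\{s=0\}$ being the paper's graph $\{u=p(x,y)\}$. The argument is correct; the only inaccuracy is the side remark that singularity of $(X,0)$ forces some branch to have positive corank (transversally meeting immersed branches already give a singular image), but your padded construction for corank-zero branches covers that case, so nothing is lost.
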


This result can be extended to regular germs $(X,0)$ by taking a parametrisation $f\colon (\mb{C}^n,S) \to (\mb{C}^{n+1},0)$.
This is done under the convention that the discriminant of a map germ $h\colon (\mb{C}^n,S) \to (\mb{C}^p,0)$ with $n < p$ is the image of $h$.

\begin{proof}[Proof of Theorem \ref{generating family wave front}]
	Let $f\colon (\mb{C}^n,S) \to (\mb{C}^{n+1},0)$ be a normalisation of $(X,0)$ with corank $k > 0$.
	We can choose coordinates in the source and target such that
		\[f(x,y)=(x,p_1(x,y),\dots,p_k(x,y),q(x,y)),\]
	with $x \in \mb{C}^{n-k}$ and $y \in \mb{C}^k$.
	We can further assume (by reordering the coordinates in the target if necessary) that
	\begin{equation} \label{frontal equation corank k}
		dq=\lambda_1\,dx_1+\dots+\lambda_{n-k}\,dx_{n-k}+\mu_1\,dp_1+\dots+\mu_k\,dp_k
	\end{equation}
	for some $\lambda_1,\dots,\lambda_{n-k},\mu_1,\dots,\mu_k \in \ms{O}_n$.
	By the Rank Distribution Lemma \ref{mu y k}, the matrix $\p \mu / \p y$ is invertible in an open neighbourhood $U \subseteq \mb{C}^k\times V$ of $S$. 
	
	Let $f\colon V \to \mb{C}^{n+1}$ be a representative of $f$.
	Consider the mapping $h\colon \mb{C}^k\times V \to \mb{C}^{n+1}$ given by
		\[h(u,x,y)=\left(x,u,q(x,y)+\sum_{i=1}^k \mu_i(x,y)(u_i-p_i(x,y))\right).\]
	Using Equation \eqref{frontal equation corank k}, the critical set of $h$ is given by the equations
		\[0=\frac{\p q}{\p y_j}+\sum_{i=1}^k  \frac{\p \mu_i}{\p y_j}(u_i-p_i)-\sum_{i=1}^k \mu_i\frac{\p p_i}{\p y_j}=\sum_{i=1}^k  \frac{\p \mu_i}{\p y_j}(u_i-p_i)\]
	for $j=1,\dots,k$.
	This is, we have the system of equations
		\[\begin{pmatrix}0\\\vdots\\0\end{pmatrix}
			=
			\begin{pmatrix}
				\dfrac{\p \mu_1}{\p y_1} & \dots & \dfrac{\p \mu_1}{\p y_k} \\
				\vdots & \ddots & \vdots \\ 
				\dfrac{\p \mu_k}{\p y_1} & \dots & \dfrac{\p \mu_k}{\p y_k}
			\end{pmatrix}
			\begin{pmatrix}
				u_1-p_1\\
				\vdots \\
				u_k-p_k
			\end{pmatrix}
			=
			\frac{\p \mu}{\p y}
			\begin{pmatrix}
				u_1-p_1\\
				\vdots \\
				u_k-p_k
			\end{pmatrix}
			.\]
	The matrix $\p \mu/\p y$ is invertible on $U$, meaning that $u_j=p_j(x,y)$ for $j=1,\dots,k$.
	This defines a graph-type submanifold of dimension $k$ on $U$, so the set of critical points of $h|_U$ is smooth.
	Moreover, we have that
		\[h(p(x,y),x,y)=(x,p(x,y),q(x,y))=f(x,y),\]
	so the discriminant of $h$ on $U$ is equal to $X\cap h(U)$.
\end{proof}

Note that the generating family $h\colon (\mb{C}^k\times \mb{C}^{n}, \{0\}\times S) \to (\mb{C}^{n+1},0)$ constructed in the proof of Theorem \ref{generating family wave front} has a representative in the form
	\[h(u,x,y)=(x,u,h'(u,x,y))\]
for some $h' \in \ms{O}_{n+k}$.
In Arnold's terminology, the generating family $h$ solely consists of the function $h'$, with $y \in \mb{C}^k$ as the variable and $x\in \mb{C}^{n-k}$ and $u \in \mb{C}^k$ as parameters.
Using Arnold's definition of the term \emph{generating family}, he showed the following:

\begin{theorem}[\cite{Arnold_I}, \S 21.4]\label{Arnold generating family stable}
	A germ of wave front $(X,0)$ with generating family $h' \in \ms{O}_N$ is stable if and only if $h'$ is $\ms{V}$-versal.
\end{theorem}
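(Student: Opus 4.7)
The plan is to establish a correspondence between $r$-parameter frontal unfoldings of $(X,0)$ and $r$-parameter unfoldings of the generating family $h'$ modulo $\ms{V}$-equivalence, and then invoke the standard infinitesimal criterion for $\ms{V}$-versality.

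For the forward direction, let $F=(f_u,u)$ be an $r$-parameter frontal unfolding of a normalisation $f$ of $(X,0)$. By Remark \ref{conjecture Legendrian}, since $f$ is a germ of wave front, each $f_u$ is also a wave front and its Nash lift $\widetilde{f_u}$ is an immersion. Applying the construction in the proof of Theorem \ref{generating family wave front} to the family $f_u$ yields, after shrinking the parameter neighbourhood, a family $H(u,x,y)=(x,u,H'(u,x,y))$; the Rank Distribution Lemma \ref{mu y k} is open in the parameter and so applies uniformly in $u$ near $0$, keeping the critical set of each $H(u,\cdot)$ smooth. Its discriminant is the image of $f_u$, and thus $H'$ is an $r$-parameter unfolding of $h'$. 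Conversely, given any unfolding $H'(u,x,y)$ of $h'$, form $H(u,x,y)=(x,u,H'(u,x,y))$; the same Rank Distribution argument shows that its critical set is smooth and its discriminant defines an $r$-parameter frontal (indeed wave front) unfolding of $(X,0)$.

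The crucial algebraic link is that two such data yield $\ms{A}$-equivalent frontal unfoldings if and only if the associated generating families are $\ms{V}$-equivalent as unfoldings of functions: a diffeomorphism of $(\mb{C}^{n+1}\times\mb{C}^r,0)$ preserving the discriminant variety corresponds, under the special coordinate form $(x,u,H'(u,x,y))$, to a contact-type equivalence of $H'$ preserving its zero locus, and vice versa. Therefore frontal stability of $f$ --- namely, the triviality of every frontal unfolding of $f$ up to $\ms{A}$-equivalence --- translates exactly into the $\ms{V}$-triviality of every unfolding of $h'$. By the standard versality criterion, this is equivalent to $h'$ being $\ms{V}$-versal.

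The main obstacle will be the precise verification that the generating family construction descends to equivalence classes in both directions. The construction of $H$ in Theorem \ref{generating family wave front} depends on a choice of coordinate splitting $(x,y)$ adapted to the corank of $f$; one must show that different choices produce $\ms{V}$-equivalent generating families, and that an $\ms{A}$-equivalence between two frontal unfoldings $(f_u,u)$ and $(g_u,u)$ (which need not preserve the splitting) induces a well-defined $\ms{V}$-equivalence between the resulting families $H'_f$ and $H'_g$. Once this functorial property is established, the equivalence between frontal stability of $(X,0)$ and $\ms{V}$-versality of $h'$ follows from the classical theory of versal deformations of function germs.
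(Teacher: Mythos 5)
The paper does not actually prove this statement: it is quoted directly from Arnold's book (\cite{Arnold_I}, \S 21.4), so there is no in-paper argument to compare yours against. Your sketch reproduces the architecture of the classical proof --- a correspondence between unfoldings of the front and unfoldings of the generating family, followed by the infinitesimal criterion for $\ms{V}$-versality --- and that is indeed the right route; the first part of your argument (the Rank Distribution Lemma being open in the parameters, so that the construction of Theorem \ref{generating family wave front} applies uniformly to a frontal unfolding) is fine.

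However, the step you call ``the crucial algebraic link'' is not a link you establish: it is essentially the entire content of the theorem, and the one-sentence justification you give for it does not work as stated. A diffeomorphism of $(\mb{C}^{n+1}\times\mb{C}^r,0)$ preserving the discriminant of $H$ has no reason to respect the fibered coordinate form $(x,u,H'(u,x,y))$, so it does not directly induce a fibered contact-type equivalence of $H'$. The genuine correspondence goes through the contact geometry of $PT^*\mb{C}^{n+1}$: for a \emph{proper} front, an $\ms{A}$-equivalence lifts via the projectivized cotangent lift to a contactomorphism preserving the Legendre fibration and carrying one Legendrian lift to the other, and one then invokes the uniqueness of generating families of a Legendrian germ up to \emph{stable} fibered $\ms{V}$-equivalence, i.e.\ up to addition of a nondegenerate quadratic form in extra variables. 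Your proposed correspondence omits both the properness hypothesis (needed so that the front determines its Legendrian lift, hence so that $\ms{A}$-equivalence of fronts agrees with Legendrian equivalence) and the word ``stable'' (two generating families of the same front may have different numbers of $y$-variables, so they can only be compared after stabilisation). You do flag the coordinate-dependence issue at the end, but resolving it is not a routine verification --- it is precisely the body of Arnold's \S\S 20--21. As a self-contained proof the proposal is therefore incomplete at its central step; as a reduction of the statement to the standard theory of generating families of Legendrian singularities, it is essentially correct.
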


Theorem \ref{Arnold generating family stable} can be phrased in our language as follows: \emph{a germ of wave front $f\colon (\mb{C}^n,S) \to (\mb{C}^{n+1},0)$ with generating family $h\colon (\mb{C}^k\times\mb{C}^n,\{0\}\times S) \to (\mb{C}^{n+1},0)$ is $\ms{F}$-stable if and only if $h$ is $\ms{A}$-stable.}

\begin{remark}\label{Ak quasihomogeneous}
	Let $f\colon (\mb{C}^n,S) \to (\mb{C}^{n+1},0)$ be a $\ms{F}$-stable germ of corank $1$ wave fronts.
	By Theorem \ref{generating family wave front}, the image of $f$ is the discriminant of a corank $1$ smooth map germ $h\colon (\mb{C}^{n+1},0) \to (\mb{C}^{n+1},0)$.
	Since $f$ is $\ms{F}$-stable, $h$ is $\ms{A}$-stable by Theorem \ref{Arnold generating family stable}, which implies that $h$ is an $A_k$ singularity with $k \leq n+1$ (following the notation from Arnold's ADE classification \cite{Arnold_I}).
	A simple computation shows that the discriminant of the $A_k$ singularity is quasihomogeneous, so $f$ is a quasihomogeneous map germ.
\end{remark}

Using Theorem \ref{generating family wave front}, we can relate the frontal Milnor number of $f$ with the discriminant Milnor number of a generating family of $f$.
The discriminant Milnor number is an analytic invariant first described by Damon and Mond in \cite{MondDamon}, which counts the number of spheres emerging in the discriminant of a stable deformation of a smooth map germ $(\mb{C}^n,S) \to (\mb{C}^p,0)$ with $n \geq p$.

\begin{corollary}\label{frontal discriminant milnor}
	Let $f\colon (\mb{C}^n,S) \to (\mb{C}^{n+1},0)$ be an $\ms{F}$-finite, proper germ of wave fronts.
	Let $h\colon (\mb{C}^N,0) \to (\mb{C}^{n+1},0)$ be a generating family for the image of $f$.
	Then $f$ admits a frontal stabilisation if and only if $h$ admits a stabilisation.
	Moreover, when both integers are defined, we have the identity
		\[\mu_\ms{F}(f)=\mu_\Delta(h),\]
	where $\mu_\Delta(h)$ denotes the discriminant Milnor number of $h$.
\end{corollary}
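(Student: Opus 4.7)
The plan is to show that the generating-family construction of Theorem \ref{generating family wave front} is compatible with frontal unfoldings, and that under this correspondence frontal stability of $f_t$ matches $\ms{A}$-stability of $h_t$ (via Theorem \ref{Arnold generating family stable}). Once this is set up, both claims in the corollary follow from the fact that the image of $f_t$ equals the discriminant of $h_t$.

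First, I would show that a $1$-parameter frontal unfolding $F=(f_t,t)$ of $f$ induces a $1$-parameter unfolding $H=(h_t,t)$ of $h$ such that $h_t$ is a generating family for the image of $f_t$ for every small $t$. The construction in the proof of Theorem \ref{generating family wave front} is explicit: one writes $f_t$ in the normal form of Lemma \ref{mu y k}, reads off the functions $\mu^t_i$ from the Nash lift $\widetilde{f_t}$, and sets
\[
h_t(u,x,y)=\Bigl(x,u,\,q_t(x,y)+\sum_{i=1}^k\mu^t_i(x,y)(u_i-p^t_i(x,y))\Bigr).
\]
By Remark \ref{conjecture Legendrian}, $\widetilde{f_t}$ is an immersion for all small $t$, so the Rank Distribution Lemma \ref{mu y k} applies uniformly in $t$ on a common neighbourhood; hence $h_t$ is a well-defined generating family of the image of $f_t$, and $H$ is a genuine unfolding of $h$. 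Conversely, given an unfolding $H$ of $h$ whose discriminant is a wave front in each fibre (which is automatic), one obtains a frontal unfolding $F$ of $f$ by taking the normalisation of the discriminant fibrewise.

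Next, I would use the reformulated version of Theorem \ref{Arnold generating family stable}: $f_t$ is $\ms{F}$-stable if and only if $h_t$ is $\ms{A}$-stable. Applying this pointwise in $t$ shows that $F$ is a frontal stabilisation of $f$ (i.e.\ $f_t$ has only $\ms{F}$-stable singularities for $t\neq 0$) if and only if $H$ is an $\ms{A}$-stabilisation of $h$ (i.e.\ $h_t$ has only $\ms{A}$-stable singularities for $t\neq 0$). This proves the first assertion.

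For the numerical equality, observe that by construction the image $X_t$ of $f_t$ equals the discriminant $\Delta(h_t)$ as germs (and, on a small enough representative, as analytic sets inside $B_\epsilon$). Therefore the frontal disentanglement $X_t\cap B_\epsilon$ coincides with the discriminant disentanglement $\Delta(h_t)\cap B_\epsilon$, and in particular they have the same homotopy type. Since both $\mu_{\ms F}(f)$ and $\mu_\Delta(h)$ are defined as the number of $n$-spheres appearing in the respective wedge-of-spheres decomposition of these disentanglements, we conclude $\mu_{\ms F}(f)=\mu_\Delta(h)$.

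The main technical point — and the step I would treat most carefully — is the uniform validity of the generating-family construction across the parameter $t$: one must check that a single representative $U$ can be chosen on which the matrix $\partial\mu^t/\partial y$ is invertible for all small $t$, and that the identification $X_t=\Delta(h_t)$ holds inside a common $B_\epsilon$ independently of $t$. This is a standard openness argument using that $\widetilde{f_t}$ remains an immersion under frontal deformations of a wave front (Remark \ref{conjecture Legendrian}), but it is the place where the wave-front hypothesis is genuinely used.
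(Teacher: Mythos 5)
Your proposal follows essentially the same route as the paper: transfer stability back and forth between $f_t$ and its generating family $h_t$ via Theorem \ref{Arnold generating family stable}, and deduce the numerical equality from the identification of the image of $f_t$ with the discriminant of $h_t$. The extra care you take about uniformity of the construction in $t$ (and the fibrewise normalisation in the converse direction) fills in details the paper's proof leaves implicit, but it is the same argument.
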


\begin{proof}
	First assume that $f$ admits an $\ms{F}$-stabilisation $f_t$, with $t \in \mb{D}$.
	Let $t \in \mb{D}^*$ and $h_t$ be a generating family for $f_t$.
	For $t \neq 0$, frontal stability of $f_t$ implies $\ms{A}$-stability of $h_t$ due to Theorem \ref{Arnold generating family stable}.
	Therefore, $h$ admits a stabilisation.
	
	Conversely, assume that $h$ admits a stabilisation $h_t$, with $t\in \mb{D}$.
	The $1$-parameter unfolding $H'(t,y)=(t,h_t(y))$ defines a $1$-parameter frontal unfolding $F(t,x)=(t,f_t(x))$ of $f$.
	Since $h_t$ is stable for all $t \neq 0$, it follows from Theorem \ref{Arnold generating family stable} that $f_t$ is $\ms{F}$-stable for $t\neq 0$.
	Therefore, $f$ admits a frontal stabilisation.

	Now assume that $f$ admits a frontal stabilisation, so that $\mu_\ms{F}(f)$ is defined.
	By definition of frontal Milnor number, $\mu_{\ms{F}}(f)$ is the number of spheres in the image of $f_t$ for $t\in \mb{D}^*$.
	The image of $f$ is the discriminant of $h$, so the image of $f_t$ is the discriminant of the induced stabilisation $h_t$ and $\mu_{\ms{F}}(f)$ is equal to the number of spheres in a stable deformation of the discriminant of $h$, which is $\mu_\Delta(h)$.
\end{proof}

To finish this section, we have the following result below, which will prove useful in \S \ref{Disentanglement module}.
Let $(X,0)$ be an analytic hypersurface germ, with $X \subset \mb{C}^{n+1}$, and $g \in \ms{O}_{n+1}$ such that $(X,0)=(V(g),0)$.
We define $\Der(-\log X)$ as the sheaf of vector fields in $\mb{C}^n$ which are tangent to $X$ outside its singular locus.
This can be computed explicitly as
	\[\Der(-\log X)=\{\xi \in \theta_n: \xi(g) \in (g)\}.\]
We say that $(X,0)$ is a \textbf{free divisor} if $\Der(-\log X)$ is a $\ms{O}_{n+1}$-free module.

\begin{corollary}\label{wave fronts are free divisors}
	Every germ of stable wave front is a free divisor of rank $n+1$.
\end{corollary}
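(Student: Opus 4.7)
The plan is to realize $(X,0)$ as a discriminant of a stable map germ and then invoke the classical theorem of Looijenga that discriminants of stable unfoldings are free divisors.

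More precisely, I would first apply Theorem \ref{generating family wave front} to produce a generating family $h\colon(\mb{C}^N,T)\to(\mb{C}^{n+1},0)$ whose discriminant is $(X,0)$. Since by hypothesis $(X,0)$ is (the image of) a stable wave front, Theorem \ref{Arnold generating family stable} (in the reformulation given just after it) tells us that $h$ is $\ms{A}$-stable. Note that $h$ goes from an $N$-dimensional source to an $(n+1)$-dimensional target with $N\ge n+1$, so in particular its discriminant is a hypersurface in $(\mb{C}^{n+1},0)$.

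Next I would invoke Looijenga's theorem (see, e.g., the treatment in \cite{MondNuno}): the discriminant of any $\ms{A}$-stable map germ $h\colon(\mb{C}^N,T)\to(\mb{C}^{n+1},0)$ with $N\ge n+1$ is a free divisor in $(\mb{C}^{n+1},0)$. Applied to our generating family, this gives that $(X,0)$ is a free divisor. The rank assertion is then automatic: by definition, the rank of a free divisor $(X,0)\subset(\mb{C}^{n+1},0)$ as an $\ms{O}_{n+1}$-module $\Der(-\log X)$ equals the ambient dimension $n+1$, because the module of logarithmic derivations always has generic rank equal to $\dim\mb{C}^{n+1}$ (it agrees with $\theta_{n+1}$ away from the singular locus).

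There is really no obstacle here: the entire content of the statement is packaged into two previously cited black boxes, namely Theorem \ref{generating family wave front} to pass from the wave front to its generating family, and Looijenga's freeness theorem to pass from $\ms{A}$-stability of the generating family to freeness of its discriminant. The only thing to check carefully is that the generating family produced by Theorem \ref{generating family wave front} is $\ms{A}$-stable precisely when $(X,0)$ is $\ms{F}$-stable, which is exactly what the reformulation of Theorem \ref{Arnold generating family stable} asserts.
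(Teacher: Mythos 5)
Your proposal takes essentially the same route as the paper's proof: realise $(X,0)$ as the discriminant of its generating family via Theorem \ref{generating family wave front}, upgrade $\ms{F}$-stability of the front to $\ms{A}$-stability of the generating family via Theorem \ref{Arnold generating family stable}, and conclude by Looijenga's theorem that discriminants of stable germs are free divisors. The only difference is cosmetic: since Theorem \ref{generating family wave front} is stated for \emph{singular} germs of wave fronts, the paper first disposes of the smooth case separately (where freeness is immediate), a triviality you could add in one line.
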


\begin{proof}
	Let $(X,0)$ be a germ of stable wave front.
	If $(X,0)$ is a smooth germ, then $(X,0)$ is a free divisor since $\Der(-\log (X,0))\cong \theta_n$, where $n$ is the dimension of $(X,0)$.
	Thus we can assume that $(X,0)$ has a singularity at the origin.
	
	By Theorem \ref{generating family wave front}, there exists a map germ $h\colon (\mb{C}^n,S) \to (\mb{C}^p,0)$ such that $(X,0)$ is the discriminant of $h$.
	Since $(X,0)$ is stable, $h$ is $\ms{A}$-stable by Theorem \ref{Arnold generating family stable}, and a result by Looijenga \cite{Looijenga_1984} implies that the discriminant of $h$ is a free divisor.
	Therefore, $(X,0)$ is a free divisor.
\end{proof}

\section{An expression for the frontal codimension} \label{Damon formula}
Let $(X,0)=(V(g),0)$ be an analytic hypersurface, with $X \subset \mb{C}^{n+1}$ and $g \in \ms{O}_{n+1}$.
We can consider the evaluation map for $g$,
\begin{funcion} \label{evaluation map}
	\ev\colon \Der(-\log X) \arrow[r] 	& J(g)	\\
	\xi \arrow[r, maps to] 			& \xi(g)
\end{funcion}
We denote the kernel of this map as $\Der(-\log g)$.

\begin{lemma}[\cite{Crk1Frontals}, cf. \cite{MondNuno}]\label{lift derlog}
	Let $f\colon (\mb{C}^n, S) \to (\mb{C}^{n+1}, 0)$ be a finite, proper frontal map germ with integral corank at most $1$, and $X$ be the germ at $0$ of the image of $f$.
	If $f$ is $\ms{F}$-finite and its Nash lift $\tilde f$ verifies that $\codim \Sigma(\tilde f) > 1$, 
		\[\Der(-\log X)=\{\eta \in \theta_{n+1}:  \omega f(\eta)=tf(\xi) \text{ for some }\xi \in \theta_n\}.\]
\end{lemma}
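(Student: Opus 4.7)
The plan is to prove the two inclusions separately. The inclusion $(\supseteq)$ is the standard chain rule argument that works whenever $f$ is finite and $g$ is a reduced equation of its image, while the reverse inclusion is the one that genuinely uses the frontal structure and the codimension hypothesis on $\Sigma(\tilde f)$.

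For $(\supseteq)$, suppose $\omega f(\eta)=tf(\xi)$, i.e.\ $\eta\circ f=df\circ\xi$. Since $f$ is finite and $X=V(g)$ is its image with $g$ reduced, we have $g\circ f=0$. Applying the derivation $\xi$ and using the chain rule,
\[
0=\xi(g\circ f)=dg|_{f}\bigl(df\circ\xi\bigr)=dg|_f(\eta\circ f)=\eta(g)\circ f.
\]
Finiteness of $f$ combined with $I(X)=(g)$ (reducedness of $g$) forces $\eta(g)\in(g)$, so $\eta\in\Der(-\log X)$.

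For $(\subseteq)$, given $\eta\in\Der(-\log X)$ we must construct $\xi\in\theta_n$ with $\eta\circ f=df\circ\xi$. I would first work on the open set $U=N\setminus\Sigma(\tilde f)$. Since the integral corank is at most $1$, on $U$ the Nash lift is an immersion and in particular $df_x$ is injective with image the honest tangent space $T_{f(x)}X$ at smooth points of $X$. The hypothesis $\eta\in\Der(-\log X)$ says $\eta(g)\in(g)$, so $\eta$ is tangent to the smooth part of $X$, hence $\eta(f(x))\in\im(df_x)$, and $\xi(x):=df_x^{-1}(\eta\circ f)(x)$ is uniquely and holomorphically defined on $U$. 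To make this concrete and to bound the poles that may appear, I would use the corank-$1$ normal form $f(u,y)=(u,\phi(u,y),\psi(u,y))$ with $u\in\mathbb C^{n-1}$, $y\in\mathbb C$, where the frontal condition (Proposition \ref{jacobian criterion frontal}) gives a principal generator for $\mc R(f)$ and Cramer's rule yields an explicit rational formula for $\xi$ whose denominator is controlled by this generator, so that the polar set of $\xi$ is contained in $\Sigma(\tilde f)$.

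The main obstacle is the extension of $\xi$ across $\Sigma(\tilde f)$. Here the hypothesis $\codim\Sigma(\tilde f)>1$ is essential: since $\theta_n\cong\ms O_n^{\,n}$ is a free (hence Cohen-Macaulay of depth $n$) module, any section defined outside a closed analytic subset of codimension $\geq 2$ extends uniquely and holomorphically to all of $N$ by the second Riemann extension theorem. Applying this to the rational $\xi$ built above produces a genuine $\xi\in\theta_n$, and by continuity $\eta\circ f=df\circ\xi$ holds on $N$, finishing the inclusion. The role of the corank hypothesis is to ensure the pole locus of the Cramer solution is contained in $\Sigma(\tilde f)$ rather than some larger set; without the integral corank $\leq 1$ assumption, the Nash lift could be more degenerate and this control on the polar locus would fail.
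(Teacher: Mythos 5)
The paper never proves this lemma — it is imported wholesale from \cite{Crk1Frontals} — so there is no internal argument to compare yours against; judged on its own, your proposal is correct for one inclusion but has a genuine gap in the other. The inclusion $(\supseteq)$, via $g\circ f=0$, the chain rule, and reducedness of $g$, is fine and standard.

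The gap is in $(\subseteq)$: you conflate $\Sigma(\tilde f)$ with $\Sigma(f)$. On $U=N\setminus\Sigma(\tilde f)$ it is the Nash lift $\tilde f$, not $f$, that is an immersion; for a frontal map germ $\Sigma(f)$ is a divisor (the paper itself remarks that $\ms{O}_n/\mc{R}(f)$ has codimension $1$), so $df_x$ fails to be injective along a codimension-one subset of $U$ and $\xi=df_x^{-1}(\eta\circ f)$ is only defined off $\Sigma(f)$. Accordingly, the Cramer-rule expression for $\xi$ has as denominator a generator $\lambda$ of $\mc{R}(f)$, whose zero locus is the divisor $\Sigma(f)$, not the codimension $>1$ set $\Sigma(\tilde f)$, and the second Riemann extension theorem — which needs the exceptional set to have codimension at least two — cannot be invoked. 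What actually has to be proved is that the Cramer numerators are divisible by $\lambda$, and that is exactly where the frontal structure must enter nontrivially: for instance by lifting $\eta$ to a contact vector field on $PT^*\mb{C}^{n+1}$ tangent to the image of $\tilde f$ and pulling it back through $\tilde f$ (which \emph{is} immersive off $\Sigma(\tilde f)$, so your extension argument applies there), or by using that $f$ is the normalization of $X$ together with Seidenberg's theorem that derivations of $\ms{O}_{X,0}$ extend to the integral closure. As written, even the cuspidal edge $f(x,y)=(x,y^2,y^3)$ defeats the argument: $\Sigma(\tilde f)=\emptyset$, yet $df$ drops rank along the whole line $y=0$, so the ``polar set of $\xi$'' is not contained in $\Sigma(\tilde f)$.
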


We first state a frontal version of Damon's Theorem from \cite{Damon}:

\begin{theorem}\label{Damon formula frontal codimension}
	Let $f$ be as above and $F$ be an $\ms{F}$-stable frontal unfolding of $f$, and $\ms{X}$ be the image of $F$.
	If $i\colon (\mb{C}^{n+1},0) \to (\mb{C}^{n+1}\times \mb{C}^r,0)$ is the inclusion germ
		\[\frac{\ms{F}(f)}{T\ms{A}_ef}\cong \frac{\theta(i)}{ti(\theta_{n+1})+i^*\Der(-\log \ms{X})}.\]
\end{theorem}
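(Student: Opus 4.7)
My approach is to adapt Damon's classical proof \cite{Damon} of the analogous formula for the $\ms{A}_e$-codimension, substituting the frontal description of $\Der(-\log \ms{X})$ from Lemma \ref{lift derlog} in place of the usual description via liftable vector fields. The key ingredients are an explicit comparison map $\Phi\colon\theta(i)\to\ms{F}(f)/T\ms{A}_ef$ built from the unfolding directions of $F$, a surjectivity argument using $\ms{F}$-versality of the stable unfolding, and a kernel computation that leverages $\ms{F}$-stability of $F$ at a deeper level.

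Concretely, I would define $\Phi$ by sending $\alpha=\eta+\sum_{j=1}^r b_j\,\p/\p u_j$, with $\eta\in\theta_{n+1}$ and $b_j\in\ms{O}_{n+1}$, to the class of
\[
\omega f(\eta)+\sum_{j=1}^r (b_j\circ f)\,\frac{\p f_u}{\p u_j}\bigg|_{u=0}.
\]
The first summand lies in $T\ms{A}_ef$ automatically, while the second lies in $\ms{F}(f)$ because $F=(f_u,u)$ is a frontal unfolding. Surjectivity of $\Phi$ onto $\ms{F}(f)/T\ms{A}_ef$ follows from $\ms{F}$-versality of $F$ (equivalent to $\ms{F}$-stability by the theorem at the end of \S2.1): the frontal Martinet-style criterion from \cite{Crk1Frontals} ensures the initial velocities $\p f_u/\p u_j|_{u=0}$ generate $\ms{F}(f)/T\ms{A}_ef$ as an $\ms{O}_{n+1}$-module via $f^*$.

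The inclusion $ti(\theta_{n+1})+i^*\Der(-\log \ms{X})\subseteq\ker\Phi$ is straightforward for the first summand, which maps into $\omega f(\theta_{n+1})\subseteq T\ms{A}_ef$. For the second, since $F$ is $\ms{F}$-stable of integral corank $\le 1$ with $\codim\Sigma(\tilde F)>1$, Lemma \ref{lift derlog} applies to $F$: any $\tilde\eta\in\Der(-\log \ms{X})$ admits $\tilde\xi\in\theta_{n+r}$ with $\omega F(\tilde\eta)=tF(\tilde\xi)$, and matching $u$-components and restricting to $u=0$ shows $\Phi(i^*\tilde\eta)\in T\ms{A}_ef$. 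The reverse inclusion is the heart of the proof: given $\alpha=\sum_j b_j\,\p/\p u_j$ with $\Phi(\alpha)=0$, there exist $\eta'\in\theta_{n+1}$ and $\xi'\in\theta_n$ with
\[
\sum_j (b_j\circ f)\,\frac{\p f_u}{\p u_j}\bigg|_{u=0}=\omega f(\eta')-tf(\xi'),
\]
and one must lift this fiberwise identity to a global relation $\omega F(\tilde\eta)=tF(\tilde\xi)$ on the whole unfolding, with $i^*\tilde\eta\equiv\alpha\pmod{ti(\theta_{n+1})}$; Lemma \ref{lift derlog} will then place $\tilde\eta$ in $\Der(-\log \ms{X})$.

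The main obstacle is precisely this lifting step. A naive extension with $u$-independent coefficients fails, since the condition $\omega F(\tilde\eta)-\sum d_j(F)\,\p f_u/\p u_j\in tf_u(\theta_{n+r})$ is genuinely $u$-dependent and need not persist away from $u=0$. This is where $\ms{F}$-stability is exploited beyond mere versality: the equality $\ms{F}(F)=T\ms{A}_eF=tF(\theta_{n+r})+\omega F(\theta_{n+1+r})$ supplies enough relations among the $tF$- and $\omega F$-images to run a Malgrange preparation argument, converting the fiber identity at $u=0$ into a coherent global relation, after which producing $\tilde\eta$ and verifying the required congruence is formal.
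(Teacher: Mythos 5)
Your overall strategy --- an explicit comparison map $\Phi$, surjectivity from versality of the stable unfolding, and a kernel computation resting on Lemma \ref{lift derlog} --- is a viable, unfolding-theoretic reading of Damon's argument, and it is genuinely different from the paper's proof. But as written it is a plan rather than a proof: the step you yourself call ``the heart of the proof'', namely lifting the fibre identity
\[
\sum_j (b_j\circ f)\,\frac{\p f_u}{\p u_j}\Big|_{u=0}=\omega f(\eta')-tf(\xi')
\]
to a global relation $\omega F(\tilde\eta)=tF(\tilde\xi)$ on the whole unfolding, is only asserted to follow from ``a Malgrange preparation argument'', and that is exactly where all the content lies. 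Concretely, taking the $u$-independent extensions $\tilde\eta_0=\eta'+\sum_j b_j\,\p/\p u_j$ and the corresponding $\tilde\xi_0$, the error term $\omega F(\tilde\eta_0)-tF(\tilde\xi_0)$ restricts to zero at $u=0$, and one must show it can be absorbed into $tF(\theta_{n+r})+\omega F(\theta_{n+r+1})$ by correcting $\tilde\eta_0,\tilde\xi_0$ with terms vanishing on $u=0$. That requires three inputs you do not supply: (i) surjectivity of $\omega F$ onto $\ms{F}(F)/tF(\theta_{n+r})$ (frontal stability of $F$); (ii) identification of its kernel with $\Der(-\log \ms{X})$, i.e.\ Lemma \ref{lift derlog} applied to $F$ itself, which forces you to check that $F$ inherits the hypotheses (proper, integral corank at most $1$, $\codim\Sigma(\tilde F)>1$, $\ms{F}$-finite); and (iii) a preparation/Nakayama step comparing the relative quotient $\ms{F}(F/r)/(tF(\theta_{n+r/r})+\omega F(\theta_{n+r+1/r}))$ reduced modulo $\mf{m}_r$ with $\ms{F}(f)/T\ms{A}_ef$. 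The paper packages precisely these three inputs into a short exact sequence of complexes $0\to A\to B\to C\to 0$ (relative, absolute, and the quotient by $t\pi$), where exactness of the middle complex $B$ is (i) plus (ii), and the isomorphism $H_1(C)\cong H_0(A)$ from the long exact homology sequence performs your lifting automatically via the connecting homomorphism; the final reduction modulo $\mf{m}_r$ is (iii). So the route can be completed, but only by proving the same facts the paper proves, and your proposal stops short of them.

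A secondary point: well-definedness of $\Phi$ already needs $(b_j\circ f)\cdot\p f_u/\p u_j|_{u=0}\in\ms{F}(f)$ for arbitrary $b_j\in\ms{O}_{n+1}$, i.e.\ that $\ms{F}(f)$ is closed under multiplication by $f^*\ms{O}_{n+1}$ and that the restriction of a frontal unfolding to a parameter axis is again frontal. Both hold in the framework of \cite{Crk1Frontals}, but neither is automatic from the definition of $\ms{F}(f)$ as a set of initial velocities, so they must be cited or proved; note also that for surjectivity versality gives generation of $\ms{F}(f)/T\ms{A}_ef$ over $\mb{C}$ by the $\p f_u/\p u_j|_{u=0}$, which is all you need there.
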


To prove this result, we first need to introduce some notation.
Consider the inclusion map germ $j_n\colon (\mb{C}^r,0) \to (\mb{C}^r\times \mb{C}^n,0)$ given by $u \mapsto (u,0)$.
We define $\theta_{n+r/r}$ (resp. $\theta_{n+r+1/r}$) as the module of vector fields $\xi \in \theta_{n+r}$ (resp. $\xi \in \theta_{n+r+1})$ such that $j_n^*\xi$ (resp. $j_{n+1}^*\xi$) is constant.
This can be interpreted as saying that vector field germs on these modules do not depend on the variables of the parameter space.
We call these \emph{relative} modules.

We also set
\begin{align*}
	\ms{F}(F/r)=\ms{F}(F)\cap \theta_{n+r/r}; && \Der(-\log \ms{X}/r)=\Der(-\log \ms{X})\cap \theta_{n+r+1/r}
\end{align*}
as the relative versions of $\ms{F}(F)$ and $\Der(-\log \ms{X})$.

\begin{proof}
We follow the proof from \cite{MondNuno}.
Let $\pi\colon \mb{C}^r\times \mb{C}^{n+1}\to \mb{C}^r$ be the canonical projection.
We consider the commutative diagram

\begin{diagram*}
	&			& 0			 							& 			0 								& 															&	\\
C:	&0 \arrow[r] 	& t\pi(\Der(-\log \ms{X})) \arrow[u] \arrow[r]		& \theta(\pi)	\arrow[u] \arrow[r] 						& 0															&	\\
B:	&0 \arrow[r] 	& \Der(-\log \ms{X}) \arrow[u,"t\pi"] \arrow[r, hook]	& \theta_{n+r+1}	\arrow[u,"t\pi"] \arrow[r,"\omega F"] 			& \displaystyle \frac{\ms{F}(F)}{tF(\theta_{n+r})} \arrow[r] \arrow[u] 			& 0	\\
A:	&0 \arrow[r] 	& \Der(-\log \ms{X}/r) \arrow[u, hook] \arrow[r, hook]	& \theta_{n+r+1/r}	\arrow[u, hook] \arrow[r, "\omega F"] 	& \displaystyle \frac{\ms{F}(F/r)}{tF(\theta_{n+r/r})} \arrow[r] \arrow[u] 	& 0	\\
	&			& 0 \arrow[u]		 					& 0 \arrow[u] 									& 0 \arrow[u]													&
\end{diagram*}

Each one of the rows in this diagram is a complex, which we label as $A$, $B$ and $C$.
In particular, we note that complex $B$ is an exact sequence, as a consequence of Lemma \ref{lift derlog} and the frontal stability of $F$.
Furthermore, the complexes in the diagram above form a short exact sequence
\begin{diagram} \label{chain complexes}
	0 \arrow[r] & A \arrow[r, hook] & B \arrow[r, "t\pi"] & C \arrow[r] & 0,
\end{diagram}
which we can extend into a long exact sequence in homology,
\begin{diagram}
	\dots \arrow[r] & H_1(B) \arrow[r] & H_1(C) \arrow[r] & H_0(A) \arrow[r] & H_0(B) \arrow[r] & \dots
\end{diagram}
Exactness of the complex $B$ implies that $H_*(B)=0$, so $H_1(C) \cong H_0(A)$.

The boundary operators in the complex $A$ are the inclusion and $\omega F$, so we have
	\[H_0(A)=\frac{\ms{F}(F/r)}{tF(\theta_{n+r/r})+\omega F(\theta_{n+r+1/r})};\]
the boundary operator in the complex $C$ is the inclusion, so
	\[H_1(C)=\frac{\theta(\pi)}{t\pi(\Der(-\log \ms{X}))}.\]
Dividing on the LHS by $\mf{m}_r\theta(F/r)\cap\ms{F}(F/r)$ and on the RHS by $\mf{m}_r\theta(\pi)$, we get
\begin{multline*}
	\frac{\theta(\pi)}{t\pi(\Der(-\log \ms{X}))+\mf{m}_d\theta(\pi)}\cong \\
	\cong \frac{\ms{F}(F/d)}{tF(\theta_{n+r/r})+\omega F(\theta_{n+r+1/r})+\mf{m}_r\theta(F/r)\cap\ms{F}(F/r)}
	\cong \frac{\ms{F}(f)}{tf(\theta_n)+\omega f(\theta_{n+1})}.
\end{multline*}
To finish the proof, we shall rewrite the left-hand side in terms of the $i$ mapping.
To do this, note that
\begin{align*}
	\theta(\pi)=\sum^r_{i=1}\ms{O}_{n+r+1} \frac{\p}{\p u_i} &\implies \frac{\theta(\pi)}{\mf{m}_r\theta(\pi)}=\sum^r_{i=1}\ms{O}_{n+1} \frac{\p}{\p u_i};\\
	\theta(i)=\sum^{n+1}_{i=1}\ms{O}_{n+1} \frac{\p}{\p y_i} \oplus \sum^r_{j=1}\ms{O}_{n+1} \frac{\p}{\p u_j} &\implies \frac{\theta(i)}{ti(\theta_{n+1})}=\sum^r_{i=1}\ms{O}_{n+1} \frac{\p}{\p u_i};
\end{align*}
It follows that
	\[\frac{\theta(\pi)}{t\pi(\Der(-\log \ms{X}))+\mf{m}_r\theta(\pi)}\cong \frac{\theta(i)}{ti(\theta_{n+1})+i^*\Der(-\log \ms{X})}.\]
\end{proof}

\begin{corollary}\label{formula codimension wave front}
	Let $f\colon (\mb{C}^n,S) \to (\mb{C}^{n+1},0)$ be a finite germ of wave front and $h\colon (\mb{C}^N,0) \to (\mb{C}^{n+1},0)$ be the generating family of $f$ given in the proof of Theorem \ref{generating family wave front}.
	Then,
		\[\frac{\ms{F}(f)}{T\ms{A}_ef}\cong \frac{\theta(h)}{T\ms{A}_eh}.\]
	In particular, $\codim_{\ms{F}_e} f=\codim_{\ms{A}_e} h$.
\end{corollary}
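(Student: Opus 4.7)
The plan is to apply two ``Damon-type'' formulas---the frontal version, Theorem \ref{Damon formula frontal codimension}, to $f$, and the classical discriminant version due to Damon \cite{Damon} to the generating family $h$---and to observe that both express the relevant codimension module by exactly the same quotient.

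First I would fix an $\ms{F}$-stable frontal unfolding
\[
F\colon (\mb{C}^r\times\mb{C}^n,\{0\}\times S)\to(\mb{C}^r\times\mb{C}^{n+1},0)
\]
of $f$, which exists because $(n,n+1)$ is in the frontal nice dimensions (if $f$ is not $\ms{F}$-finite then both sides of the claimed identity are infinite-dimensional and there is nothing to prove). Repeating the explicit construction from the proof of Theorem \ref{generating family wave front} with parameters produces an $r$-parameter unfolding $H$ of $h$ whose discriminant is precisely the image $\ms{X}$ of $F$. By Theorem \ref{Arnold generating family stable}, this $H$ is $\ms{A}$-stable, and by Looijenga's theorem (as used in Corollary \ref{wave fronts are free divisors}) its discriminant $\ms{X}$ is a free divisor. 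In particular, Damon's discriminant formula applies to the pair $(h,H)$.

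Applying Theorem \ref{Damon formula frontal codimension} to $(f,F)$ and Damon's formula to $(h,H)$ both identify the respective quotients with
\[
\frac{\theta(i)}{ti(\theta_{n+1})+i^*\Der(-\log \ms{X})},
\]
where $i\colon(\mb{C}^{n+1},0)\hookrightarrow(\mb{C}^{n+1}\times\mb{C}^r,0)$ is the natural inclusion. Combining the two isomorphisms yields $\ms{F}(f)/T\ms{A}_e f \cong \theta(h)/T\ms{A}_e h$, and equality of $\mb{C}$-dimensions gives $\codim_{\ms{F}_e} f = \codim_{\ms{A}_e} h$.

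The main point that warrants careful verification is the compatibility of the generating family construction with unfoldings: that an $\ms{F}$-stable frontal unfolding of $f$ lifts to an $\ms{A}$-stable unfolding of $h$ whose discriminant equals the image of the original unfolding. This is not a substantial obstacle, since the explicit formula for $h$ in the proof of Theorem \ref{generating family wave front} makes sense with additional parameters, and the invertibility of $\p\mu/\p y$ in the Rank Distribution Lemma \ref{mu y k} is an open condition, so it persists under any small unfolding.
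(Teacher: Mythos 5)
Your proposal follows essentially the same route as the paper: fix an $\ms{F}$-stable frontal unfolding $F$ of $f$, build the parametrised generating family $H$ (an $\ms{A}$-stable unfolding of $h$ whose discriminant is the image $\ms{X}$ of $F$), and identify both $\ms{F}(f)/T\ms{A}_ef$ and $\theta(h)/T\ms{A}_eh$ with $\theta(i)/(ti(\theta_{n+1})+i^*\Der(-\log\ms{X}))$ via Theorem \ref{Damon formula frontal codimension} and Damon's theorem respectively. The argument is correct; the only cosmetic difference is that the paper also records the hypothesis check that $\tilde f$ is an immersion, so $\Sigma(\tilde f)$ has codimension greater than $1$ and Theorem \ref{Damon formula frontal codimension} applies.
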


\begin{proof}
	First note that $f$ has integral corank $0$ since its Nash lift $\tilde f$ is an immersion.
	This also implies that $\Sigma(\tilde f)$ is equal to the empty set as germs at $S$, thus it has codimension greater than $1$.

	Let $F\colon (\mb{C}^r\times \mb{C}^n,\{0\}\times S) \to (\mb{C}^r\times \mb{C}^{n+1},0)$ be a $\ms{F}$-stable frontal unfolding of $f$.
	Following the arguments in Remark \ref{conjecture Legendrian}, $F$ is a germ of wave front.
	Using Theorem \ref{Damon formula frontal codimension}, we have that
		\[\frac{\ms{F}(f)}{T\ms{A}_ef}\cong \frac{\theta(i)}{ti(\theta_{n+1})+i^*\Der(-\log \ms{X})},\]
	where $\ms{X}$ is the image of $F$.
	
	Let $H\colon (\mb{C}^{N'},0) \to (\mb{C}^r\times \mb{C}^{n+1},0)$ be the generating family of $F$ given in the proof of Theorem \ref{generating family wave front}.
	It is clear by construction that $H$ is an unfolding of $h$ with discriminant $\ms{X}$.
	Moreover, $H$ is a stable map germ by Theorem \ref{Arnold generating family stable}.
	It then follows from Damon's Theorem \cite{Damon} that
		\[\frac{\theta(i)}{ti(\theta_{n+1})+i^*\Der(-\log \ms{X})} \cong \frac{\theta(h)}{T\ms{A}_eh}.\]
	Combining both identities, we have the statement.
\end{proof}

A result by Damon and Mond \cite{MondDamon} states that, for a smooth, $\ms{A}$-finite map germ $h \colon (\mb{C}^q,S) \to (\mb{C}^p,0)$ such that $(q,p)$ are nice dimensions, we have the inequality
	\[\codim_{\ms{A}_e} h \leq \mu_\Delta(h)\]
with equality if $h$ is quasihomogeneous.

Let $f\colon (\mb{C}^n,S) \to (\mb{C}^{n+1},0)$ be a finite, $\ms{F}$-finite germ of wave front, and $h\colon (\mb{C}^N,0) \to (\mb{C}^{n+1},0)$ be the generating family of $f$ given in the proof of Theorem \ref{generating family wave front}.
We have that $N=n+k$, where $k$ is the corank of $f$.
Using Corollary \ref{frontal discriminant milnor}, we have $\codim_{\ms{F}_e} f=\codim_{\ms{A}_e} h$.

If $k=1$, then $h\colon (\mb{C} \times \mb{C}^n, \{0\}\times S) \to (\mb{C}^{n+1},0)$ is a corank $1$ equidimensional map germ and it admits a stabilisation.
Conversely if $k > 1$ but $(n+k,n+1)$ are in Mather's nice dimensions, $h$ admits a stabilisation.
In both cases, it follows from Corollary \ref{formula codimension wave front} that $f$ admits a frontal stabilisation and $\mu_{\ms{F}}(f)=\mu_\Delta(h)$.
Therefore, if $(n+k,n+1)$ are in the nice dimensions or $k=1$, we conclude that
\begin{equation} \label{mond conjecture wave front}
	\codim_{\ms{F}_e} f \leq \mu_\ms{F}(f),
\end{equation}
with equality if $f$ is quasihomogeneous.

In \S \ref{Disentanglement module}, we shall use a different approach to prove this result, which does not require the use of generating families.
The methods developed in that section may also offer a glimpse into how to prove the conjecture in the case of corank $1$ proper frontal map germs with $n=2$.

\begin{definition}
	Let $f\colon (\mb{C}^n,S) \to (\mb{C}^{n+1},0)$ be a smooth map germ with image equation $g=0$.
	Let $F$ be an $r$-parameter unfolding of $f$ and $j\colon (\mb{C}^{n+1},0) \to (\mb{C}^{n+1}\times \mb{C}^r,0)$ be the inclusion germ given by $y \mapsto (y,0)$.
	We say that $G=0$ is a \textbf{good defining equation} for the image of $F$ if it is reduced, $G \in J(G)$ and $g=j^*G$.
\end{definition}

We can always construct a frontal unfolding admitting a good defining equation.
Indeed, if $F(x,u)=(f_u(x),u)$ is a frontal unfolding of $f$ with reduced image equation $G$, the unfolding
	\[\tilde F(x,u,t)=(f_u(x),u,t)\]
is trivially frontal with reduced image equation $\tilde G(y,u,t)=G(y,u)e^t$.

If $G$ is a good defining equation, the evaluation map \eqref{evaluation map} is surjective and we have an isomorphism
\begin{equation} \label{iso derlog}
	\frac{\theta_{n+r+1}}{\Der(-\log G)} \longrightarrow J(G).
\end{equation}
Moreover, since $G \in J(G)$, there exists a $\epsilon \in \theta_{n+r+1}$ such that $\ev(\epsilon)=G$; this vector field germ satisfies the identity
	\[\Der(-\log \ms{X})=(\epsilon) \oplus \Der(-\log G).\]
If we assume that $G(y,0)=g(y)$, the evaluation map sends $\Der(-\log \ms{X})$ to $(G)$.
We also define $J_y(G)$ as the image under the evaluation map of $\p y_1,\dots,\p y_{n+1}$,
	\[J_y(G)=\left(\frac{\p G}{\p y_1},\dots,\frac{\p G}{\p y_{n+1}}\right).\]
Therefore, the isomorphism above induces in turn another isomorphism,
\begin{equation} \label{isomorphism pre tensor product}
	\frac{\theta_{n+r+1}}{\Der(-\log \ms{X})+\ms{O}_{n+r+1}\{\p y_1,\dots, \p y_{n+1}\}}\cong \frac{J(G)}{J_y(G)+(G)}.
\end{equation}
Tensoring on both sides of the identity with the ring $\ms{O}_r/\mf{m}_r$ yields
	\[\frac{\theta(i)}{i^*\Der(-\log \ms{X})+ti(\theta_{n+1})}\cong \frac{J(G)}{J_y(G)+(G)}\otimes_{\ms{O}_r} \frac{\ms{O}_r}{\mf{m}_r}.\]
Using Theorem \ref{Damon formula frontal codimension}, we conclude that
	\[\frac{\ms{F}(f)}{tf(\theta_n)+\omega f(\theta_{n+1})}\cong \frac{J(G)}{J_y(G)+(G)}\otimes_{\ms{O}_r} \frac{\ms{O}_r}{\mf{m}_r},\]

\begin{corollary}\label{frontal codimension good equation}
	Let $f$, $F$ and $G$ be as above.
	We have the identity
		\[\codim_{\ms{F}_e}(f)=\dim_{\mb{C}}\left(\frac{J(G)}{J_y(G)+(G)}\otimes_{\ms{O}_r} \frac{\ms{O}_r}{\mf{m}_r}\right)\]
\end{corollary}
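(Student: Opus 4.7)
The plan is to observe that all the substantive module-theoretic work has already been completed just above the statement, so the proof of the corollary amounts to taking complex dimensions on both sides of the isomorphism that has just been derived.

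Concretely, I would proceed as follows. First, I would recall that by definition
\[
\codim_{\ms{F}_e}(f)=\dim_\mb{C} \frac{\ms{F}(f)}{T\ms{A}_e f}= \dim_\mb{C} \frac{\ms{F}(f)}{tf(\theta_n)+\omega f(\theta_{n+1})}.
\]
Next, I would invoke the chain of isomorphisms assembled in the preceding discussion: Theorem \ref{Damon formula frontal codimension} gives
\[
\frac{\ms{F}(f)}{T\ms{A}_e f}\cong \frac{\theta(i)}{ti(\theta_{n+1})+i^*\Der(-\log\ms{X})},
\]
and then using that $G$ is a good defining equation, the evaluation isomorphism \eqref{iso derlog} together with the decomposition $\Der(-\log\ms{X})=(\epsilon)\oplus \Der(-\log G)$ (with $\ev(\epsilon)=G$) identifies this with
\[
\frac{J(G)}{J_y(G)+(G)}\otimes_{\ms{O}_r} \frac{\ms{O}_r}{\mf{m}_r},
\]
which is exactly the isomorphism stated just before the corollary. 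Taking complex dimensions of both sides then yields the claimed equality.

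There is really no hidden obstacle at this stage: the nontrivial ingredients, namely the Damon-type identification of $\ms{F}(f)/T\ms{A}_e f$ via $\Der(-\log\ms{X})$ and the surjectivity of the evaluation map guaranteed by the hypothesis $G\in J(G)$, are already in place. The only point one should double check is that the tensor product with $\ms{O}_r/\mf{m}_r$ is compatible with taking $\mb{C}$-dimension and that the quotient $J(G)/(J_y(G)+(G))\otimes_{\ms{O}_r}\mb{C}$ is indeed finite-dimensional, which follows from the $\ms{F}$-finiteness of $f$ via the left-hand side. Thus the corollary is a direct reformulation of the isomorphism preceding it, and the proof is essentially a single line.
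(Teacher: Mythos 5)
Your proposal is correct and follows exactly the paper's route: the corollary is stated as an immediate consequence of the isomorphism derived just before it (Theorem \ref{Damon formula frontal codimension} combined with the evaluation-map identification \eqref{isomorphism pre tensor product} afforded by the good defining equation, then tensored with $\ms{O}_r/\mf{m}_r$), and one simply takes $\mb{C}$-dimensions. The paper gives no separate proof beyond that preceding discussion, so your one-line argument matches it.
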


Note that we need to assume that $\Sigma(\tilde f)$ has codimension greater than $1$ for either of the theorems stated in this section to hold.
In particular, this result always holds for wave fronts.

\section{A Jacobian module for frontal disentanglements} \label{Disentanglement module}
The expression for the frontal codimension derived on the previous section is based on a similar result from \cite{Disentanglements}.
In that article, the authors define a module $M(g)$ for a smooth map germ $f\colon (\mb{C}^n,S) \to (\mb{C}^{n+1},0)$ with reduced image equation $g=0$, whose dimension over $\mb{C}$ equals $\mu_I(f)$ when $(n,n+1)$ are in the range of Mather's nice dimensions (i.e. for $n \leq 14$) or when $f$ has corank $1$.

Given a smooth $r$-parameter unfolding $F$ of $f$, they also construct a \emph{relative} module $M_y(G)$ with the property that
	\[M_y(G) \otimes_{\ms{O}_r} \frac{\ms{O}_r}{\mf{m}_r} \cong M(g),\]
where $G=0$ is a reduced image equation for $F$ which specifies into $g$.
They then go to show that under the conditions above, $\mu_I(f)\geq \codim_{\ms{A}_e}f$ if and only if $M_y(G)$ is Cohen-Macaulay of dimension $r$.
In particular, they show that $M(g)$ has finite dimension over $\mb{C}$ if and only if $f$ is $\ms{A}$-finite.

The main obstacle one faces when trying to reach a similar result in the framework of frontal mappings is that some of the key results in \cite{Disentanglements} rely on the fact that the singular set of an $\ms{A}$-finite map germs $(\mb{C}^n,S) \to (\mb{C}^{n+1},0)$ has codimension greater than $1$ when $n > 1$.
Nonetheless, if the singular set of $f$ has codimension $1$, the module $M(g)$ can sometimes have finite dimension over $\mb{C}$ even if $f$ is not $\ms{A}$-finite (see Example \ref{cuspidal Mg} below).
This motivates the question of whether the results from \cite{Disentanglements} can be adapted to our framework.

We begin by giving some preliminary definitions.
The conductor ideal of a smooth map germ $f\colon (\mb{C}^n,S) \to (\mb{C}^{n+1},0)$ with image germ $(X,0)$ is given by
	\[C(f)=\{h \in \ms{O}_{X,0}: h \cdot \ms{O}_n \subset \ms{O}_{X,0}\}.\]
A lemma of Piene \cite{Piene} gives an expression to compute this ideal:

\begin{lemma} \label{piene lemma}
	Let $g(y)=0$ be a reduced equation for the image of $f$.
	If $f$ is finite and generically 1-to-1, there is a unique $\lambda \in \ms{O}_n$ such that
		\[\frac{\p g}{\p y_i}\circ f = (-1)^i\lambda \det(df_1,\dots, \widehat{df_i}, \dots, df_{n+1})\]
	for $i=1,\dots,n+1$, where $\widehat{\cdot}$ denotes an omitted entry.
	Moreover, $C(f)$ is generated by $\lambda$.
\end{lemma}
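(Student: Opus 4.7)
The plan is to derive the explicit formula for $\lambda$ from the chain rule together with a cofactor identity, and then identify $\lambda$ with a generator of the Dedekind different, which for a Gorenstein target coincides with the conductor. Differentiating the identity $g\circ f\equiv 0$ with respect to $x_j$ yields
$$\sum_{i=1}^{n+1}\bigl(\tfrac{\p g}{\p y_i}\circ f\bigr)\,\tfrac{\p f_i}{\p x_j}=0,\qquad j=1,\ldots,n,$$
so the column vector $v=((\p g/\p y_i)\circ f)_i$ lies in the kernel of the Jacobian map $J_f\colon\ms O_n^{n+1}\to\ms O_n^n$ whose $(j,i)$-entry is $\p f_i/\p x_j$. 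Expanding the determinant of the $(n+1)\times(n+1)$ matrix obtained by adjoining to $J_f$ a copy of one of its rows along that row shows that the vector $w$ with $w_i=(-1)^{i+1}m_i$ also lies in $\ker J_f$. Equivalently, the wedge identity $f^*dg=\sum_i v_i\,df_i=0$ yields the Plücker-type relations $v_i\,m_j=(-1)^{j-i}v_j\,m_i$ for all $i,j$.

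Since $f$ is finite and generically one-to-one onto the $n$-dimensional hypersurface $V(g)$, the rank of $J_f$ equals $n$ at a generic point and each $m_i$ is a non-zero-divisor at every branch of $S$; the Plücker relations then force $v=\lambda w$ (with a sign absorbed into $\lambda$ to match the $(-1)^i$ in the statement) for a unique $\lambda$ in the total ring of fractions of $\ms O_n$, which gives uniqueness. The key technical step is to show $\lambda\in\ms O_n$. My plan is to use the Poincaré residue description of the dualising module: $\omega_{X,0}$ is generated by the class of
$$\tau_i=(-1)^{i-1}\frac{dy_1\wedge\cdots\wedge\widehat{dy_i}\wedge\cdots\wedge dy_{n+1}}{\p g/\p y_i},$$
which is independent of $i$ modulo $g$, and the Plücker relations imply that $f^*\tau_i=\lambda^{-1}\,dx_1\wedge\cdots\wedge dx_n$ for every $i$. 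Since $f^*\omega_{X,0}$ embeds into $\omega_{\ms O_n}\cong\ms O_n\,dx_1\wedge\cdots\wedge dx_n$ as a fractional ideal, this forces $\lambda\in\ms O_n$.

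The conductor claim is then the classical identification, for finite extensions with Gorenstein target, of $C(f)$ with the Dedekind different $\mathfrak{d}_{\ms O_n/\ms O_{X,0}}$; the computation in the preceding paragraph shows this different is generated by $\lambda$, so $C(f)=(\lambda)$. The main obstacle is the holomorphic extension of $\lambda$ across the ramification locus $\Sigma(f)$: the purely linear-algebraic considerations place $\lambda$ only in the fraction field, and descending to $\ms O_n$ genuinely requires the Gorenstein duality structure of the hypersurface target. This is the technical core of Piene's original argument, so for a streamlined presentation I would write out the chain rule and cofactor identity in detail and refer the reader to \cite{Piene} for the duality step.
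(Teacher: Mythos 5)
The paper does not actually prove this lemma: it is quoted from Piene \cite{Piene} and used as a black box, so there is no in‑paper argument to measure you against. Your reconstruction of the elementary half is correct and is indeed how the formula arises: differentiating $g\circ f=0$ places $v=((\p g/\p y_i)\circ f)_i$ in the kernel of the Jacobian, the cofactor expansion of a matrix with a repeated row places the vector of signed maximal minors there as well, and since a finite, generically one‑to‑one map from a smooth source is generically an immersion, the kernel is generically one‑dimensional on each branch, giving $v=\lambda w$ for a unique $\lambda$ in the total ring of fractions. One slip here: it is not true that ``each $m_i$ is a non‑zero‑divisor'' --- for the cuspidal edge $f(x,y)=(x,y^2,y^3)$ one has $m_1\equiv 0$; what you need, and what holds, is only that on each branch some $m_i$ is non‑zero.

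The genuine defect is in the duality step, where your key inclusion points the wrong way. Since $f^*\tau_i=\pm\lambda^{-1}\,dx_1\wedge\dots\wedge dx_n$, the assertion that $f^*\omega_{X,0}$ embeds into $\omega_{\ms{O}_n}\cong\ms{O}_n\,dx_1\wedge\dots\wedge dx_n$ would force $\lambda^{-1}\in\ms{O}_n$, i.e.\ $\lambda$ a unit --- already false for the cuspidal edge, where $\lambda=-y^2$ generates $C(f)=(y^2)$. The correct statement, which is the actual content of Piene's argument, is the reverse identification $\omega_{\ms{O}_n}\cong \mathrm{Hom}_{\ms{O}_{X,0}}(f_*\ms{O}_n,\omega_{X,0})\cong C(f)\cdot f^*\omega_{X,0}$ inside the meromorphic $n$-forms, using that the hypersurface $(X,0)$ is Gorenstein so that $\omega_{X,0}$ is free on the residue form $\tau$; comparing generators gives $\ms{O}_n=C(f)\cdot\lambda^{-1}$, which yields $\lambda\in\ms{O}_n$ and $C(f)=(\lambda)$ simultaneously. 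You defer this step to \cite{Piene} in any case, so the repair is only to state the inclusion in the right direction; but as written, the sketch argues for a false conclusion, and the claim that ``$f^*\omega_{X,0}$ embeds into $\omega_{\ms{O}_n}$'' cannot be left standing.
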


It follows from the proof of \cite{MondPellikaan}, Theorem 3.4 that the preimage of $C(f)$ via $f^*$ is the first Fitting ideal of $f$.
Moreover, multiplication by $\lambda$ induces an isomorphism
	\[\frac{C(f)}{J(g)\ms{O}_n}\cong \frac{\ms{O}_n}{\mc{R}(f)},\]
where $\mc{R}(f)$ is the ramification ideal of $f$ (see \S 1).
It is worth noting that, unlike in the $\ms{A}$-finite case, the ring $\ms{O}_n/\mc{R}(f)$ has codimension $1$ in $\ms{O}_n$ when $f$ is a frontal map germ, so it is not determinantal.

\begin{definition}
	Let $f\colon (\mb{C}^n,S) \to (\mb{C}^{n+1},0)$ be a finite smooth map germ with image $(X,0)=(V(g),0)$, $g$ reduced.
	We define the module $\hat M(g)$ as the kernel of the epimorphism
		\[\dfrac{\ms{F}_1(f)}{J(g)} \longrightarrow \dfrac{C(f)}{J(g)\ms{O}_n}\]
	induced by $f^*$, where $J(g)\ms{O}_n$ is the ideal in $\ms{O}_n$ generated by $f^*(J(g))$.
	Moreover since $(f^*)^{-1}(C(f))=\ms{F}_1(f)$,
		\[\hat M(g)=\frac{(f^*)^{-1}(J(g)\ms{O}_n)}{J(g)}.\]
\end{definition}

\begin{example}\label{cuspidal Mg}
	Let $f\colon (\mb{C}^2,0) \to (\mb{C}^3,0)$ be the cuspidal edge, parametrised as
		\[f(x,y)=(x,y^2,y^3).\]
	This mapping is $\ms{F}$-stable and its singular set is given by the source line of equation $y=0$.
	Moreover, its frontal Milnor number is $0$, as its image is homeomorphic to a plane.
	
	Let $g(X,Y,Z)=Z^2-Y^3$, so that $(V(g),0)$ is the germ of the image of $f$.
	We have
		\[\dfrac{\ms{F}_1(f)}{J(g)}=\frac{\langle Z,Y\rangle }{\langle Z,Y^2\rangle} \cong \frac{\langle Y\rangle }{\langle Y^2\rangle} \cong \frac{\langle y^2 \rangle}{\langle y^3 \rangle}=\dfrac{C(f)}{J(g)\ms{O}_2},\]
	meaning that $\hat M(g)=0$.
	Therefore, $\dim_\mb{C} \hat M(g)$ is finite. 
\end{example}

The reason to denote this module as $\hat M(g)$ instead of simply $M(g)$ is that the dimension of $\hat M(g)$ does not coincide in general with the frontal Milnor number (see Example \ref{Mg whitney umbrella}).

\begin{proposition}[\cite{Disentanglements}, Proposition 3.3] \label{snake lemma Mg}
	Assume that $f$ is finite and generically 1-to-1.
	We have the following exact sequence of $\ms{O}_{n+1}$-modules:
	\begin{diagram*}
		0 \arrow[r] & \dfrac{J(g)+\langle g\rangle}{J(g)} \arrow[r] & \hat M(g) \arrow[r] & \dfrac{J(g)\ms{O}_n}{J(g)\ms{O}_{X,0}} \arrow[r] & 0,
	\end{diagram*}
	where $(X,0)=(V(g),0)$.
\end{proposition}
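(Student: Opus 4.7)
The plan is to exhibit the sequence by constructing the right-hand map directly as one induced by $f^*$ and computing its kernel explicitly; no diagram chase is really needed, but the statement can be viewed as a manifestation of the snake lemma applied to $f^*\colon \ms{O}_{n+1}\to \ms{O}_n$.

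First, I would check that the right-hand quotient even makes sense, by showing that $J(g)\ms{O}_{X,0}\subseteq J(g)\ms{O}_n$ once $\ms{O}_{X,0}$ is embedded into $\ms{O}_n$ via $f^*$. This is where Piene's Lemma \ref{piene lemma} enters: every generator $(\p g/\p y_i)\circ f$ equals $\pm\lambda$ times an $n\times n$ minor of $df$, so $J(g)\ms{O}_n\subseteq \lambda\ms{O}_n$; the definition of the conductor then gives $\lambda\ms{O}_n\subseteq \ms{O}_{X,0}$ because $\lambda$ generates $C(f)$. The same computation has the bonus consequence that $J(g)\ms{O}_n \subseteq \ms{O}_{X,0}$ as a subset of $\ms{O}_n$, which will be crucial for surjectivity below.

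Next, I would define
$$\phi\colon (f^*)^{-1}(J(g)\ms{O}_n) \longrightarrow \frac{J(g)\ms{O}_n}{J(g)\ms{O}_{X,0}}, \qquad h \longmapsto f^*(h)+J(g)\ms{O}_{X,0}.$$
Surjectivity is immediate from the previous paragraph: any $\eta\in J(g)\ms{O}_n$ already lies in $\ms{O}_{X,0}$, hence equals $f^*(h)$ for some $h \in \ms{O}_{n+1}$, and such $h$ automatically belongs to $(f^*)^{-1}(J(g)\ms{O}_n)$. For the kernel, $h$ maps to zero iff $f^*(h)=f^*(h')$ for some $h'\in J(g)$, i.e.\ iff $h-h'\in\ker f^* = \langle g\rangle$, so $\ker\phi = J(g)+\langle g\rangle$. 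Passing to the quotient by $J(g)$ turns $\phi$ into the desired surjection $\hat M(g)\twoheadrightarrow J(g)\ms{O}_n/J(g)\ms{O}_{X,0}$ with kernel $(J(g)+\langle g\rangle)/J(g)$, producing the short exact sequence.

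The only substantive step is the containment $J(g)\ms{O}_n\subseteq\ms{O}_{X,0}$, which genuinely uses the hypothesis that $f$ is finite and generically one-to-one (so that the function $\lambda$ from Piene's Lemma exists and generates $C(f)$). Everything else is a routine identification of a kernel, and I would present the argument in that order: first justify the target, then define $\phi$, then verify its surjectivity and compute $\ker\phi$.
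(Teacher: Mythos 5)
Your argument is correct, but it is not the route taken in \cite{Disentanglements} (the paper itself does not reprove the proposition; it cites that reference and only remarks that the proof there proceeds by building a commutative diagram and applying the snake lemma, the diagram being available precisely because Piene's Lemma \ref{piene lemma} holds). What you do instead is construct the right-hand arrow directly as the map induced by $f^*$ on $(f^*)^{-1}(J(g)\ms{O}_n)$ and compute its kernel by hand. The two essential ingredients are the same in both treatments: the containment $J(g)\ms{O}_n\subseteq \lambda\ms{O}_n\subseteq\ms{O}_{X,0}$ coming from Piene's Lemma and the definition of the conductor (this is what makes the target module meaningful and gives surjectivity), and the identification $\ker f^*=\langle g\rangle$, which holds because $g$ is reduced and the image of $f$ is exactly $V(g)$; you use the latter implicitly when you write $h-h'\in\ker f^*=\langle g\rangle$, and it also gives $J(g)\ms{O}_{X,0}=f^*(J(g))$, which is the form of the denominator your kernel computation actually needs. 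Your version is more elementary and self-contained, at the cost of not embedding the sequence into the larger diagram that \cite{Disentanglements} exploits later (for instance to compare the right-hand term with $\theta(f)/T\ms{A}_ef$ in the finitely determined case); for the statement as given, your direct computation is a complete proof.
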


Proposition \ref{snake lemma Mg} is proved on \cite{Disentanglements} by constructing a commutative diagram to apply the snake lemma.
This diagram can be constructed so long as Lemma \ref{piene lemma} holds for $f$, hence the repetition in the assumptions.
In particular, if $f$ is finitely determined, the module on the right-hand side is isomorphic over $\mb{C}$ to $\theta(f)/T\ms{A}_ef$ (\cite{Mond_VanishingCycles}, Proposition 2.1), and $\hat M(g)$ can be related to the $\ms{A}_e$-codimension of $f$.
This is no longer true when $f$ is not finitely determined, as shown in the example below.

\begin{example}\label{Mg whitney umbrella}
	Let $f\colon (\mb{C}^2,0) \to (\mb{C}^3,0)$ be the folded Whitney umbrella, parametrised as
		\[f(x,y)=(x,y^2,xy^3).\]
	This map germ is not finitely determined, hence its $\ms{A}_e$-codimension is infinite.
	However, $J(g)\ms{O}_2/J(g)\ms{O}_{X,0}$ is generated by the class of the function $xy^4$ modulo $J(g)\ms{O}_{X,0}$.
	It follows that
		\[\dim_\mb{C} \frac{J(g)\ms{O}_2}{J(g)\ms{O}_{X,0}}=\dim_\mb{C} \hat M(g)=1.\]
	This is not equal to the frontal Milnor number of $f$, which is $0$.
\end{example}

Now let $F\colon (\mb{C}^r\times \mb{C}^n, \{0\}\times S) \to (\mb{C}^r\times \mb{C}^{n+1},0)$ be a $\ms{F}$-stable frontal unfolding of $f$, and $G=0$ be a good defining equation for the image of $F$.
If $j\colon (\mb{C}^{n+1},0) \to (\mb{C}^r\times \mb{C}^{n+1},0)$ is the germ of the inclusion $y \mapsto (0,y)$, then we have $J(g)=j^*(J_y(G))$.

\begin{definition}\label{Mg definition}
	Let $f\colon (\mb{C}^n,S) \to (\mb{C}^{n+1},0)$ be a proper frontal map germ with integral corank at most $1$ and image equation $g=0$.
	Let $F$ be a frontal stable $r$-parameter unfolding of $f$ with good defining equation $G=0$.
	We define the modules
	\begin{align*}
		M_y(G)=\frac{J(G)}{J_y(G)}; && M_{\ms{F}}(g)=M_y(G)\otimes_{\ms{O}_r} \frac{\ms{O}_r}{\mf{m}_r}.
	\end{align*}
\end{definition}

The assumptions from Definition \ref{Mg definition} will be held throughout the rest of this section.

\begin{example}
	Let $f(x,y)=(x,y^2,y^7+x^7y^5)$.
	A simple computation on \textsc{Singular} shows that $M_{\ms{F}}(g)$ is not finite. 
\end{example}

Note that the module $M_y(G)$ has slicing dimension $r$ in $\ms{O}_{n+r+1}$ by construction.
Since $M_y(G)$ is finitely generated and $\ms{O}_{n+r+1}$ is Noetherian, it has Krull dimension $r$.

We wish to show that the dimension of $M_{\ms{F}}(g)$ is equal to $\mu_{\ms{F}}(f)$.
In order to do this, we shall make use of the Samuel multiplicity and the following results:

\begin{lemma}\label{conservation of multplicity}
	Let $M$ be a finitely generated sheaf of $\ms{O}_n$-modules with representative $\ms{M}$.
	There exists an $\eps > 0$ such that
		\[e(\mf{m}_r,M)=\sum_{p \in B_\eps} e(\mf{m}_{r,w},\ms{M}_{(p,w)})\]
	where $B_\eps \subseteq \mb{C}^r$ denotes the ball of centre $0$ and radius $\eps$.
\end{lemma}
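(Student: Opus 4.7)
The plan is to prove this conservation-of-multiplicity identity by computing both sides via Hilbert--Samuel functions and exploiting additivity of length in coherent analytic families. First I would fix a coherent representative $\ms{M}$ of $M$ on an open neighbourhood of the origin, with $M=\ms{M}_0$, and view $\ms{M}$ as a sheaf of $\ms{O}_r$-modules via the projection $\pi$ onto the parameter space. Since $e(\mf{m}_r,M)$ is defined, $\mf{m}_r$ is $M$-primary of the correct dimension and every quotient $M/\mf{m}_r^k M$ has finite length; by coherence this localness persists in a neighbourhood, so there exists $\eps>0$ for which $\supp(\ms{M}/\mf{m}_r^k \ms{M})\cap (B_\eps\times\{w\})$ is finite whenever $|w|$ is sufficiently small.

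Next, I would reduce the problem to a fibrewise length comparison. For a finitely generated module $N$ of Krull dimension $r$ over a local Noetherian ring $(R,\mf{m})$,
\[
e(\mf{m},N)=\lim_{k\to\infty}\frac{r!}{k^r}\,\ell_R\bigl(N/\mf{m}^k N\bigr).
\]
Applied to both sides, the claim reduces to showing, for all sufficiently small $|w|$ and every $k$,
\[
\ell\bigl(M/\mf{m}_r^k M\bigr)=\sum_{p\in B_\eps}\ell\bigl(\ms{M}_{(p,w)}/\mf{m}_{r,w}^k \ms{M}_{(p,w)}\bigr).
\]
The right-hand side is exactly the length of the stalk at $w$ of the coherent pushforward $\pi_*(\ms{M}/\mf{m}_r^k \ms{M})$. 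This pushforward is coherent on a neighbourhood of $0$ in $\mb{C}^r$, and its stalk length is upper semicontinuous and generically constant; combined with the decomposition of the stalk at $w$ into a direct sum over the finitely many preimages in $B_\eps$, this yields the pointwise identity. Dividing by $k^r/r!$ and letting $k\to\infty$ then recovers the multiplicity formula.

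The main obstacle is the geometric control at the boundary, that is, ensuring no mass escapes the ball $B_\eps$ as $w\to 0$. Concretely, I would need to choose $\eps>0$ so that $\supp(\ms{M}/\mf{m}_r\ms{M})\cap\partial B_\eps$ avoids $\pi^{-1}(0)$, and then shrink the disc in the parameter space accordingly so that the fibrewise supports remain compactly contained in the interior of $B_\eps$. This is a Milnor-type compactness argument: once the projection restricted to $\supp(\ms{M}/\mf{m}_r^k\ms{M})$ is proved to be finite and proper over a sufficiently small disc, additivity of length along the coherent family, together with the Hilbert--Samuel expression for multiplicity, gives the stated equality.
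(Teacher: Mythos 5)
Your overall strategy is reasonable in outline, but the central step is false as stated, so there is a genuine gap. You reduce the lemma to the claim that for every $k$ and all sufficiently small $w$,
\[
\ell\bigl(M/\mf{m}_r^k M\bigr)=\sum_{p}\ell\bigl(\ms{M}_{(p,w)}/\mf{m}_{r,w}^k\,\ms{M}_{(p,w)}\bigr),
\]
and you justify this by upper semicontinuity and generic constancy of the fibre length of the coherent pushforward $\pi_*(\ms{M}/\mf{m}_r^k\ms{M})$. But upper semicontinuity only gives the inequality $\ell(M/\mf{m}_r^kM)\ge\sum_p\ell(\cdots)$; equality for every $k$ would require $\pi_*(\ms{M}/\mf{m}_r^k\ms{M})$ to be $\ms{O}_r$-flat (locally free) at $0$, which is not among the hypotheses and fails in general. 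A minimal counterexample: take $r=1$ and $\ms{M}=\ms{O}_2/(x^2,xy)$ with $y$ the parameter. Then $\ell(M/y^kM)=k+1$ (basis $1,y,\dots,y^{k-1},x$), while the unique point of the generic fibre contributes only $k$; the Hilbert--Samuel functions disagree for every $k$, even though the multiplicities ($1=1$) do coincide. So the identity you invoke holds only to leading order in $k$, and proving that the discrepancy is $O(k^{r-1})$ is essentially the entire content of the lemma; your argument supplies at best the one-sided estimate $e(\mf{m}_r,M)\ge\sum_p e(\mf{m}_{r,w},\ms{M}_{(p,w)})$, not the equality.

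For comparison, the paper does not prove the lemma at all: it derives it from \cite{MondNuno}, Corollary E.5, applied to the canonical projection onto the parameter space. The mechanism behind that result is not conservation of the Hilbert--Samuel function but conservation of an Euler characteristic: by Serre's theorem, $e(\mf{m}_r,M)$ equals the Euler characteristic of the Koszul complex $K_\bullet(u_1,\dots,u_r;M)$ on a system of generators of $\mf{m}_r$, and the alternating sum of the lengths of the homology sheaves of a complex of coherent sheaves with finitely supported homology is constant along the family, whereas each individual length (in particular $\ell(H_0)=\ell(M/\mf{m}_rM)$) is only upper semicontinuous. If you want a self-contained proof, route the argument through this Koszul/Euler-characteristic formulation rather than through the Hilbert--Samuel function. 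Your properness discussion (choosing $\eps$ so that no support escapes through the boundary of the ball as $w\to 0$) is correct and genuinely needed, but it does not repair the semicontinuity gap.
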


This result can be derived from \cite{MondNuno}, Corollary E.5 by taking the canonical projection $\mb{C}^{n+1} \times \mb{C}^r \to \mb{C}^r$.
Of course, there is a clear caveat that this sum will only make sense when almost all the multiplicities on the RHS are zero; however, this assumption will be shown to hold in the context where we wish to apply it.

In our proof, we shall consider the sheaf of $\ms{O}_n$-modules $M_{rel,\ms{F}}(G)$ given by
\begin{align*}
	M_y(G)_{(p,w)}=\frac{J(G)_{(p,w)}}{J_y(G)_{(p,w)}},
\end{align*}
where $M_y(G)$ is supported on an open neighbourhood $V\times W \subseteq \mb{C}^{n+1}\times \mb{C}^r$ of $0$.
We shall denote the representative of a given stalk as $\ms{M}_y(F)_{(p,w)}$.

\begin{lemma} \label{multiplicity inequality}
	Given a module $M$,
		\[e(\mf{m}_r,M) \leq \dim_\mb{C} \left(M\otimes_{\ms{O}_r} \frac{\ms{O}_r}{\mf{m}_r}\right)\]
	with equality if $M$ is Cohen-Macaulay.
\end{lemma}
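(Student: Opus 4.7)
The plan is to recognize this statement as the classical Northcott--Rees inequality comparing the Hilbert--Samuel multiplicity of a finitely generated module along a parameter ideal with the colength of that ideal, together with the standard equality criterion in the Cohen--Macaulay case.

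First, I would note that $\ms{O}_r$ is a regular local ring whose maximal ideal $\mf{m}_r=(u_1,\dots,u_r)$ is generated by a regular sequence. In the ambient setting of Definition \ref{Mg definition}, $M=M_y(G)$ is finitely generated over $\ms{O}_{n+r+1}$ with Krull dimension $r$, and the fact that $M_y(G)$ has slicing dimension $r$ means that the projection of $\supp M$ onto the parameter space is finite. By Weierstrass preparation, this promotes $M$ to a finitely generated $\ms{O}_r$-module. Hence $\mf{m}_r$ is an $\mf{m}_r$-primary parameter ideal for $M$ over $\ms{O}_r$; since the residue field is $\mb{C}$, we have $\ell_{\ms{O}_r}(M/\mf{m}_r M)=\dim_\mb{C}\bigl(M\otimes_{\ms{O}_r}\ms{O}_r/\mf{m}_r\bigr)$.

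The core step is then a direct appeal to the classical inequality (Bruns--Herzog, \emph{Cohen--Macaulay Rings}, Corollary 4.7.11, or Matsumura, \emph{Commutative Ring Theory}, Theorem 17.11): for any finitely generated module $M$ over a Noetherian local ring $(R,\mf{m})$ of dimension $d=\dim M$ and any parameter ideal $\mf{q}$ for $M$, one has $e(\mf{q},M)\le \ell_R(M/\mf{q}M)$, with equality when the generators of $\mf{q}$ form an $M$-regular sequence. Applying this with $R=\ms{O}_r$, $\mf{q}=\mf{m}_r$, and $d=r$ yields the desired inequality. For the equality clause, the Cohen--Macaulay hypothesis forces the system of parameters $u_1,\dots,u_r$ to be an $M$-regular sequence, so the Koszul complex $K_\bullet(u_1,\dots,u_r;M)$ is acyclic. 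The Hilbert--Samuel polynomial of $M$ along $\mf{m}_r$ may then be computed term-by-term from this Koszul resolution, and its leading coefficient recovers $\ell(M/\mf{m}_r M)/r!$, giving equality in the multiplicity formula.

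The main obstacle is not mathematical depth but bibliographic tidiness: one must locate the inequality in the precise module-theoretic generality needed here and verify the implicit finiteness of $M$ over $\ms{O}_r$. Once these are checked, the lemma reduces to a citation of a standard commutative-algebra fact.
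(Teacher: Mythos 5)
Your identification of this statement as the classical inequality $e(\mf{q},M)\le\ell(M/\mf{q}M)$ for a parameter ideal, with equality when $M$ is Cohen--Macaulay, is correct, and your argument (finiteness of $M$ over $\ms{O}_r$ via the finite projection to the parameter space, then the Northcott--Rees/Serre inequality together with the fact that a system of parameters on a Cohen--Macaulay module is a regular sequence) is the standard one. The paper gives no proof of this lemma at all --- it is stated as a known commutative-algebra fact --- so your proposal supplies exactly the justification the authors implicitly invoke, including the finiteness check over $\ms{O}_r$ that the paper's bare statement ``given a module $M$'' leaves tacit.
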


\begin{theorem} \label{frontal Milnor multiplicity}
	Let $f\colon (\mb{C}^n,S) \to (\mb{C}^{n+1},0)$ be a proper $\ms{F}$-finite germ of wave fronts.
	Assume that either $(n,n+1)$ are frontal nice dimensions or $f$ has corank $1$.
	Let $F\colon (\mb{C}^r\times \mb{C}^n,\{0\}\times S) \to (\mb{C}^r\times \mb{C}^{n+1},0)$ be a stable frontal unfolding of $f$ and $G=0$ be a good defining equation for the image of $F$.
	Then, we have
		\[\mu_\ms{F}(f)=e(\mf{m}_r,M_y(G)).\]
\end{theorem}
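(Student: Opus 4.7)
My plan is to apply the Conservation of Multiplicity (Lemma \ref{conservation of multplicity}) to localise the global invariant $e(\mf{m}_r, M_y(G))$ as a finite sum of local Samuel multiplicities, and then to match each local contribution against Siersma's formula $\mu_{\ms{F}}(f) = \sum_{y \in B_\eps \setminus X_w} \mu(g_w; y)$. Fixing $w \in \mb{C}^r$ small enough and generic so that $f_w$ has only frontal stable singularities, one obtains
\[
e(\mf{m}_r, M_y(G)) \;=\; \sum_{p \in B_\eps} e\bigl(\mf{m}_{r,w},\, \ms{M}_y(G)_{(p,w)}\bigr),
\]
and the task becomes a point-by-point comparison.

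For a point $(p, w)$ with $p \notin X_w$, the ``good defining equation'' hypothesis $G \in J(G)$ combined with $G(p,w) \neq 0$ forces $J(G)_{(p,w)} = \ms{O}_{(p,w)}$, so $M_y(G)_{(p,w)} \cong \ms{O}_{(p,w)}/J_y(G)$, which is precisely the relative Jacobian algebra of $G$ viewed as an $r$-parameter deformation of the function germ $g_w$ at $p$. If $p$ is an isolated critical point of $g_w$, this relative algebra is Cohen--Macaulay of dimension $r$ over $\ms{O}_r$, and classical deformation theory of isolated hypersurface singularities identifies its Samuel multiplicity with the Milnor number $\mu(g_w; p)$; if $p$ is a regular point of $g_w$, then $J_y(G)_{(p,w)} = \ms{O}_{(p,w)}$ and the module vanishes. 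These contributions therefore reproduce the Siersma sum exactly.

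The delicate case is a point $(p, w) \in \ms{X}$, where I need to show that the local Samuel multiplicity is zero. Since $F$ is $\ms{F}$-stable at $(p, w)$, Theorem \ref{Arnold generating family stable} together with Remark \ref{Ak quasihomogeneous} identifies the local image, up to $\ms{A}$-equivalence, with the discriminant of an $A_k$ singularity, hence a free divisor by Corollary \ref{wave fronts are free divisors}. I would exploit the splitting $\Der(-\log \ms{X}) = (\epsilon) \oplus \Der(-\log G)$ together with the isomorphism $\theta_{n+r+1}/\Der(-\log G) \cong J(G)$ from \S \ref{Damon formula} to produce enough log-derivations transverse to the $u$-fibres to force $M_y(G)_{(p,w)}$ to have Krull dimension strictly less than $r$ over $\ms{O}_r$, and therefore Samuel multiplicity zero; alternatively one may verify the vanishing directly on each $A_k$ normal form and transport the computation by $\ms{A}$-equivalence.

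I expect this on-$\ms{X}$ vanishing to be the main obstacle, since the off-$\ms{X}$ analysis is essentially classical once one observes that $G$ becomes a unit there, whereas the on-$\ms{X}$ case is precisely where the wave front hypothesis (rather than the weaker frontal one) must be used decisively, via the free divisor structure afforded by Looijenga \cite{Looijenga_1984}. Once both local cases are settled, summing over $p$ and invoking the Siersma formula \cite{Siersma} yields $e(\mf{m}_r, M_y(G)) = \mu_{\ms{F}}(f)$, as required.
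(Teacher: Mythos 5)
Your overall architecture coincides with the paper's: localise via conservation of multiplicity (Lemma \ref{conservation of multplicity}), compare point by point with Siersma's formula \cite{Siersma}, and treat the off-image points by observing that $G$ is a unit there, so that $M_y(G)_{(p,w)}\cong \ms{O}_{(p,w)}/J_y(G)$ is the relative Jacobian algebra, which surjects onto $\ms{O}_{r,w}$ and has zero-dimensional special fibre, hence is Cohen--Macaulay of dimension $r$ with Samuel multiplicity $\mu(g_w;p)$ by Lemma \ref{multiplicity inequality}. That part is sound and matches the paper.

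The gap is exactly where you flag it: the vanishing at points $(p,w)\in\ms{X}$. The mechanism you propose --- using the splitting of $\Der(-\log\ms{X})$ to force the Krull dimension of $M_y(G)_{(p,w)}$ below $r$ --- is not developed and is not how the argument actually closes. The paper's mechanism is different and more direct: since $f_w$ is a stable wave front for $w\neq 0$, it is quasihomogeneous (in the frontal nice dimensions because it lies in Arnold's full $ADE$ classification of stable fronts \cite{Arnold_I}; in corank $1$ because it is an $A_\ell$ discriminant, Remark \ref{Ak quasihomogeneous}), and for a stable quasihomogeneous germ the fibre module $M_y(G)_{(p,w)}\otimes_{\ms{O}_{r,w}}\ms{O}_{r,w}/\mf{m}_{r,w}\cong M_{\ms{F}}(g_w)_p$ is zero; Nakayama then kills the whole local module, hence its multiplicity. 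Two concrete corrections to your text follow from this. First, your identification of the local image with an $A_k$ discriminant via Remark \ref{Ak quasihomogeneous} is only valid when $f$ has corank $1$; under the ``frontal nice dimensions'' hypothesis you must invoke the full $ADE$ list, since $D$ and $E$ types occur. Second, the free divisor structure from Corollary \ref{wave fronts are free divisors} and Looijenga is \emph{not} where the wave front hypothesis enters in this theorem (it is used later, in the proof of the Mond inequality, to get Cohen--Macaulayness of the global $M_y(G)$ via Pellikaan's lemma); what is used decisively here is quasihomogeneity of the stable local models. Your ``alternatively, verify the vanishing on normal forms'' remark is the right instinct, but as stated it neither covers the higher-corank stable types nor explains why quasihomogeneity forces $M_{\ms{F}}(g_w)_p=0$, so the central step remains unproved in your proposal.
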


\begin{proof}
	Let $F\colon U \to V\times W$ be a representative of $f$.
	We set $F(x,w)=(f_w(x),w)$ with $f_w\colon U_w \to V_w$, and write $g_w=0$ for the reduced image equation of $f_w$.

	Using a result by Siersma \cite{Siersma}, we can compute the frontal Milnor number of $f$ using the expression
	\begin{equation} \label{Siersma formula v1}
		\mu_\ms{F}(f)=\sum_{p \in B_\eps}\mu(g_w;p)=\sum_{p \in B_\eps}\dim_\mb{C} \frac{\ms{O}_{B_\eps,p}}{J(g_w)_p},
	\end{equation}
	for a generic $w \in W$, where $B_\eps \subseteq V_w$ is a Milnor ball of radius $\eps > 0$ centered at $p$.
	Since $f_w$ is stable for $w \neq 0$, $g_w$ has an isolated singularity at $w=0$.
	This implies that $\ms{O}_{B_\eps,p}=J(G)_{(p,w)}$ for a generic $w$ and we have the identity
	\begin{equation}\label{tensor product for Jacobian algebra}
		\frac{\ms{O}_{B_\eps,p}}{J(g_w)_p} \cong \frac{J(G)_{(p,w)}}{J_y(G)_{(p,w)}}\otimes_{\ms{O}_{r,w}}\frac{\ms{O}_{r,w}}{\mf{m}_{r,w}}=M_{\ms{F}}(g)_{p}.
	\end{equation}
	
	Since $f$ is a germ of wave fronts, $f_w$ is a $r$-parameter family of wave fronts by Remark \ref{conjecture Legendrian}.
	If $(n,n+1)$ are frontal nice dimensions, then $n \leq 5$ and $f_w$ belongs to one of the families in Arnold's ADE classification of stable wave fronts \cite{Arnold_I} for all $w \neq 0$, meaning that $f_w$ is quasihomogeneous.
	If $f$ has corank $1$, $f_w$ is an $A_\ell$ singularity with $\ell \leq n+1$ for $w \neq 0$ (see Remark \ref{Ak quasihomogeneous}) and it is quasihomogeneous.
	In both cases, the RHS in Equation \eqref{tensor product for Jacobian algebra} becomes the zero module along $V_w$.
	Therefore, Equation \eqref{Siersma formula v1} becomes
	\begin{equation} \label{Siersma formula v2}
		\mu_\ms{F}(f)=\sum_{p \in B_\eps\backslash V_w}\dim_\mb{C} \left(\ms{M}_y(G)_{(p,w)}\otimes_{\ms{O}_{r,w}} \frac{\ms{O}_{r,w}}{\mf{m}_{r,w}}\right)
	\end{equation}

	Let $\ms{M}_y(G)_{(p,w)}$ be a representative of $M_y(G)_{(p,w)}$.
	By Lemma \ref{multiplicity inequality}, we have the inequality
		\[\dim_\mb{C} \left(\ms{M}_y(G)_{(p,w)}\otimes_{\ms{O}_{r,w}} \frac{\ms{O}_{r,w}}{\mf{m}_{r,w}}\right) \geq e(\mf{m}_{r,w},\ms{M}_y(G)_{(p,w)}).\]
	We wish to show that we have equality for $w \neq 0$.
	To do so, note that the inclusion map germ $(\mb{C}^r,w) \rightarrow (\mb{C}^r\times B_\eps,(p,w))$ induces a module epimorphism $\ms{M}_y(G)_{(p,w)} \to \ms{O}_{r,w}$, hence the dimension of $\ms{M}_y(G)_{(p,w)}$ is at least $r$.
	Additionally, since $g_w$ is regular for $w \neq 0$, the module
		\[\ms{M}_y(G)_{(p,w)}\otimes_{\ms{O}_{r,w}} \frac{\ms{O}_{r,w}}{\mf{m}_{r,w}} \cong \frac{\ms{O}_{B_\eps,p}}{J(g_w)}\]
	has finite length and thus dimension $0$.
	It follows that $\ms{M}_y(G)_{(p,w)}$ has dimension $r$, so it is a complete intersection ring and thus a Cohen-Macaulay $\ms{O}_r$-module.
	It then follows from Lemma \ref{multiplicity inequality} that we achieve the equality and equation \eqref{Siersma formula v2} becomes
		\[\mu_\ms{F}(f)=\sum_{\mathclap{p \in B_\eps\backslash V_w}} e(\mf{m}_{r,w},\ms{M}_y(G)_{(p,w)}).\]
	Choosing a finitely supported representative $\ms{M}_y(G)_{(p,w)}$, we now conclude from Lemma \ref{conservation of multplicity} that
		\[\mu_\ms{F}(f)=e(\mf{m}_r,M_y(G)),\]
	as stated.
\end{proof}

\begin{theorem}\label{mond conjecture wave front}
	Let $f\colon (\mb{C}^n,S) \to (\mb{C}^{n+1},0)$ be a $\ms{F}$-finite germ of wave front.
	Assume that $f$ is finite and either $(n,n+1)$ are frontal nice dimensions or $f$ has corank $1$.
	Then,
		\[\mu_{\ms{F}}(f) \geq \codim_{\ms{F}_e}f,\]
	with equality if $f$ is quasihomogeneous.
\end{theorem}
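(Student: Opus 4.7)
The plan is to combine the identification $\mu_{\ms F}(f) = e(\mf m_r, M_y(G))$ from Theorem \ref{frontal Milnor multiplicity} and the formula for $\codim_{\ms F_e}(f)$ from Corollary \ref{frontal codimension good equation}, with the Jacobian module $M_{\ms F}(g) = M_y(G) \otimes_{\ms O_r} \ms O_r/\mf m_r$ playing the role of a bridge. The keystone of the argument will be the Cohen-Macaulay property of $M_y(G)$ as an $\ms O_r$-module.

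First I would assemble the relevant chain of inequalities. By Theorem \ref{frontal Milnor multiplicity} and Lemma \ref{multiplicity inequality},
$$\mu_{\ms F}(f) \;=\; e(\mf m_r, M_y(G)) \;\le\; \dim_{\mb C} M_{\ms F}(g),$$
with equality when $M_y(G)$ is Cohen-Macaulay of dimension $r$. On the other hand, since $G$ is a good defining equation, $G \in J(G)$, so the class $[G]$ generates a cyclic submodule of $M_y(G)$ sitting in a short exact sequence
$$0 \to \langle [G]\rangle \to M_y(G) \to J(G)/(J_y(G) + (G)) \to 0.$$
Tensoring over $\ms O_r$ with $\ms O_r/\mf m_r$ and applying Corollary \ref{frontal codimension good equation} to the rightmost term yields $\dim_{\mb C} M_{\ms F}(g) \ge \codim_{\ms F_e}(f)$. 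If $f$ is quasihomogeneous, I would choose a quasihomogeneous unfolding $F$ so that $G$ is quasihomogeneous of positive degree with positive weights in $(y,w)$; Euler's identity $d\,G = \sum \alpha_i y_i\,\partial_{y_i} G + \sum \beta_j w_j\,\partial_{w_j} G$ then gives $[G] \in \mf m_r M_y(G)$, so the image of $\langle[G]\rangle$ in $M_{\ms F}(g)$ vanishes and this second inequality becomes an equality.

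The main obstacle is proving $M_y(G)$ is Cohen-Macaulay of dimension $r$ over $\ms O_r$. Its Krull dimension is $r$ by the construction of $F$, so what must be shown is $\depth_{\ms O_r} M_y(G) \ge r$. Here the wave-front hypothesis is essential: by Theorem \ref{generating family wave front} there is a generating family $H$ of $F$ whose discriminant is $(\mathcal X,0)$, and by Theorem \ref{Arnold generating family stable} this $H$ is $\ms A$-stable. By Corollary \ref{wave fronts are free divisors} (Looijenga's theorem), $(\mathcal X,0)$ is a free divisor, so $\Der(-\log \mathcal X)$ is a free $\ms O_{n+r+1}$-module; via the evaluation map $\Der(-\log\mathcal X)\to(G)$ this freeness yields a finite free presentation from which depth information on $J(G)$ and $J_y(G)$ can be extracted through the Auslander-Buchsbaum formula. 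The strategy is to transport the Damon-Mond Cohen-Macaulayness argument of \cite{MondDamon} for the discriminant Jacobian module of $H$ to our module $M_y(G)$, either by establishing an explicit short exact sequence relating the two modules or by a direct homological computation exploiting the free divisor structure of $(\mathcal X,0)$. I expect this transfer to be the principal technical step, since the other ingredients are already in place.

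Once the Cohen-Macaulay property is established, the first inequality above becomes an equality, so $\mu_{\ms F}(f) = \dim_{\mb C} M_{\ms F}(g) \ge \codim_{\ms F_e}(f)$; in the quasihomogeneous case both inequalities become equalities, giving $\mu_{\ms F}(f) = \codim_{\ms F_e}(f)$ and completing the proof.
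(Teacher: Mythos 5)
Your overall architecture coincides with the paper's: the chain $\mu_{\ms{F}}(f)=e(\mf{m}_r,M_y(G))\le\dim_{\mb{C}}M_{\ms{F}}(g)$ from Theorem \ref{frontal Milnor multiplicity} and Lemma \ref{multiplicity inequality}, the comparison with $\codim_{\ms{F}_e}(f)$ via Corollary \ref{frontal codimension good equation}, and the reduction of everything to the Cohen--Macaulayness of $M_y(G)$. The parts you actually carry out are correct, and your treatment of the quasihomogeneous case (Euler's identity forcing $G\in J_y(G)+\mf{m}_rJ(G)$, so that the class of $G$ dies after tensoring with $\ms{O}_r/\mf{m}_r$ and the surjection onto $\bigl(J(G)/(J_y(G)+(G))\bigr)\otimes_{\ms{O}_r}\ms{O}_r/\mf{m}_r$ becomes an isomorphism) is in fact more explicit than the justification given in the paper.

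However, the decisive step --- that $M_y(G)=J(G)/J_y(G)$ is Cohen--Macaulay of dimension $r$ --- is left in your proposal as a ``strategy to transport the Damon--Mond argument'' rather than an actual proof, and this is exactly where the content of the theorem lies. The paper closes it as follows. Freeness of $\Der(-\log\ms{X})$ (Corollary \ref{wave fronts are free divisors}) together with the splitting $\Der(-\log\ms{X})=\langle\epsilon\rangle\oplus\Der(-\log G)$ coming from the good defining equation yields the length-two free resolution
\[
0 \longrightarrow \Der(-\log G)\longrightarrow\theta_{n+r+1}\longrightarrow\ms{O}_{n+r+1}\longrightarrow\ms{O}_{n+r+1}/J(G)\longrightarrow 0,
\]
so $\pd\bigl(\ms{O}_{n+r+1}/J(G)\bigr)=2$ and, by Auslander--Buchsbaum, $\depth\bigl(\ms{O}_{n+r+1}/J(G)\bigr)=n+r-1$; since $V(J(G))$ is the singular locus of the discriminant of a stable generating family, it also has dimension $n+r-1$, so $\ms{O}_{n+r+1}/J(G)$ is Cohen--Macaulay of codimension $2$. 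The ingredient missing from your sketch is then Pellikaan's lemma on quotients of two ideals (Lemma \ref{Pellikaan lemma}): $J_y(G)\subset J(G)$ is generated by exactly $n+1=\codim\bigl(J(G)/J_y(G)\bigr)$ elements and $\ms{O}_{n+r+1}/J(G)$ is Cohen--Macaulay of codimension $2$, whence $M_y(G)$ is Cohen--Macaulay. Without this (or an equivalent substitute) your first inequality cannot be upgraded to the equality $\mu_{\ms{F}}(f)=\dim_{\mb{C}}M_{\ms{F}}(g)$, and the argument does not close.
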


To prove this result, we need a result by Pellikaan:

\begin{lemma}[\cite{Pellikaan}, \S 3] \label{Pellikaan lemma}
	Let $J\subset I$ be ideals in a ring $R$.
	Assume that $J$ is generated by $\codim I/J$ elements and $R/I$ is a Cohen-Macaulay module of codimension $2$.
	Then, $I/J$ is a Cohen-Macaulay module.
\end{lemma}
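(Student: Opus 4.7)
The plan is to apply the depth lemma to the short exact sequence of $R$-modules
\[0 \longrightarrow I/J \longrightarrow R/J \longrightarrow R/I \longrightarrow 0\]
induced by the inclusion $J \subset I$. After localizing at a suitable maximal ideal to reduce to the case where $R$ is a local Cohen-Macaulay ring of dimension $d$, set $c=\codim(I/J)$. Then $\dim(I/J)=d-c$, while the Cohen-Macaulay hypothesis on $R/I$ gives $\dim(R/I)=\depth(R/I)=d-2$.

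Next, I would bound $\depth(R/J)$ from below using the hypothesis that $J$ is generated by $c$ elements. Writing $J=(f_1,\ldots,f_c)$ and iterating the standard inequality $\depth(M/fM)\geq\depth(M)-1$ (valid for any $f\in R$ and any finitely generated $R$-module $M$, and equivalent to the Koszul-depth sensitivity statement) yields
\[\depth(R/J)\ \geq\ \depth(R)-c\ =\ d-c.\]

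Applying the depth lemma to the short exact sequence then gives
\[\depth(I/J)\ \geq\ \min\bigl\{\depth(R/J),\ \depth(R/I)+1\bigr\}\ \geq\ \min\{d-c,\ d-1\}\ =\ d-c,\]
where the final equality uses $c\geq 1$, which is automatic whenever $I/J\neq 0$. Combined with the universal bound $\depth(I/J)\leq \dim(I/J)=d-c$, equality is forced, so $I/J$ is Cohen-Macaulay.

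The argument is short, and the main technical input is the depth estimate on $R/J$ in terms of the number of generators. The codimension-$2$ hypothesis on $R/I$ is precisely calibrated so that $\depth(R/I)+1=d-1$ is large enough for the depth lemma to close the argument for every $c\geq 1$: had we assumed instead that $R/I$ is Cohen-Macaulay of some codimension $k$, the same strategy would have required the additional hypothesis $c\geq k-1$ to still force $\depth(I/J)\geq d-c$.
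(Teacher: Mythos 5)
The paper does not prove this lemma; it quotes it from Pellikaan, whose argument is homological: using the containment $J\subseteq I$ and the fact that $R/I$ is Cohen--Macaulay of codimension $2$ (hence has a free resolution of length $2$), he splices the Koszul complex on the $c=\codim I/J$ generators of $J$ into an explicit free resolution of $I/J$ of length $c$, and the Auslander--Buchsbaum formula then yields $\depth(I/J)=\dim R-c=\dim(I/J)$. Your route through the depth lemma applied to $0\to I/J\to R/J\to R/I\to 0$ is a genuinely different strategy, but it contains a fatal gap.

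The gap is the estimate $\depth(R/J)\ge\depth(R)-c$ for a $c$-generated ideal, and in particular the ``standard inequality'' $\depth(M/fM)\ge\depth(M)-1$ from which you propose to deduce it. Neither is true. For the latter, take $R=k[[x,y,z]]$ and $M=\operatorname{coker}\varphi$, where
\[
\varphi=\begin{pmatrix} y & z\\ -x & 0\\ 0 & -x\end{pmatrix}\colon R^2\longrightarrow R^3.
\]
Then $\varphi$ is injective, so $\pd M=1$ and $\depth M=2$ by Auslander--Buchsbaum; but reducing the presentation modulo $x$ gives $M/xM\cong R/(x,y,z)\oplus (R/(x))^2$, which has a direct summand $k$ and hence depth $0$, not $\ge 1$. (Koszul depth sensitivity controls $\operatorname{grade}(J,M)$, the length of a maximal $M$-sequence contained in $J$; it says nothing about the $\mathfrak{m}$-depth of $M/JM$.) For the ideal-theoretic statement you actually need, note that over a regular local ring $\depth(R/J)\ge d-c$ is, by Auslander--Buchsbaum, equivalent to $\pd(R/J)\le c$, and by classical examples of Burch and of Kohn there exist ideals generated by \emph{three} elements with $\pd(R/J)$ arbitrarily large; since in the paper's application $c=n+1\ge 3$ as soon as $n\ge 2$, your key step fails exactly in the cases of interest. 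A telling symptom is that your argument never uses that the generators of $J$ lie in $I$, nor any feature of $R/I$ beyond $\depth(R/I)\ge d-2$: these hypotheses are precisely what Pellikaan's resolution argument consumes, and they cannot be dispensed with.
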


\begin{proof}[Proof of Theorem \ref{mond conjecture wave front}]
	Let $F, G$ be as in Theorem \ref{frontal Milnor multiplicity}.
	Corollary \ref{frontal codimension good equation} states that
	\begin{multline} \label{inequality codimension milnor}
		\codim_{\ms{F}_e}(f)=\dim_{\mb{C}}\left(\frac{J(G)}{J_y(G)+(G)}\otimes_{\ms{O}_r} \frac{\ms{O}_r}{\mf{m}_r}\right) \leq \\ \leq \dim_{\mb{C}}\left(\frac{J(G)}{J_y(G)}\otimes_{\ms{O}_r} \frac{\ms{O}_r}{\mf{m}_r}\right)=\dim_{\mb{C}}M_\ms{F}(g).
	\end{multline}
	On the other hand, using Lemma \ref{multiplicity inequality} and Theorem \ref{frontal Milnor multiplicity}, we have that
	\begin{equation} \label{eq: inequality multiplicity}
		\dim_{\mb{C}}M_{\ms{F}}(g) \geq e(\mf{m}_e,M_y(G))=\mu_\ms{F}(f).
	\end{equation}
	If we can show that $M_y(G)$ is a Cohen-Macaulay module, it will follow from Lemma \ref{multiplicity inequality} that the inequality \eqref{eq: inequality multiplicity} is an equality.

	Let $\ms{X}$ be the image of $F$.
	Using Corollary \ref{wave fronts are free divisors}, we have that $(X,0)$ is a free divisor, meaning that $\Der(-\log \ms{X})$ is a free module over $\ms{O}_{n+r+1}$.
	Since $G$ is a good defining equation, we have the decomposition
		\[\Der(-\log \ms{X})=\langle \epsilon \rangle \oplus \Der(-\log G),\]
	where $\epsilon \in \theta_{n+r+1}$ is such that $\epsilon(G)=G$, so $\Der(-\log G)$ is also a free module over $\ms{O}_{n+r+1}$.
	We can then construct the following free resolution:
	\begin{funcion*}
		0 \arrow[r] &[-1em] \Der(-\log G)  \arrow[r] 	& \theta_{n+r+1} \arrow[r, "\ev_G"] 	& \ms{O}_{n+r+1} \arrow[r] 	& \dfrac{\ms{O}_{n+r+1}}{J(G)}  \arrow[r] 	&[-1em] 0 \\
				&					& \xi \arrow[r, maps to] 			& \xi(G)					& 									& 
	\end{funcion*}
	It follows that the projective dimension of $\ms{O}_{n+r+1}/J(G)$ is $2$.
	Using the Auslander-Buchsbaum formula, we deduce that the depth of $\ms{O}_{n+r+1}/J(G)$ is equal to $n+r-1$.
	
	Using Corollary \ref{wave fronts are free divisors} once again, we have that $G$ is the generating equation of the singular locus of a smooth map germ $(\mb{C}^{N+r},0) \to (\mb{C}^{n+r+1},0)$.
	It follows that $\ms{O}_{n+r+1}/J(G)$ has dimension $n+r-1$, and thus it is a Cohen-Macaulay module of codimension $2$.
	
	Recall that $J_y(G)$ is spanned by definition by the functions
		\[\frac{\p G}{\p y_1}, \dots, \frac{\p G}{\p y_{n+1}},\]
	meaning that the slicing dimension of $J(G)/J_y(G)$ is equal to $r$.
	Therefore, the number of generators of $J_y(G)$ is equal to the codimension of $J(G)/J_y(G)$, so $M_y(G)$ is a Cohen-Macaulay module by Lemma \ref{Pellikaan lemma}.
	We conclude using Lemma \ref{multiplicity inequality} that Equation \eqref{eq: inequality multiplicity} is an equality.
	
	If $f$ is quasihomogeneous, $G$ is quasihomogeneous and we attain the equality in Equation \eqref{inequality codimension milnor} due to Lemma \ref{multiplicity inequality}.
\end{proof}

\section{Declarations}
The authors have no competing interests to declare that are relevant to the content of this article.

\nocite{*}

\end{document}